\newcommand\myfootnote[1]{
\renewcommand{\thefootnote}{}
\footnotetext{#1}
\def\thefootnote{\@arabic\c@footnote}
}
\renewcommand{\subsection}{\@startsection{subsection}{2}{0mm}{-\baselineskip}{-5pt}{\it \bf}}
\newtheorem{theorem}{Theorem}
\newtheorem{lemma}{Lemma}
\newtheorem{example}{Example}
\newtheorem{remark}{Remark}
\title{DERIVATIONS AND AUTOMORPHISMS OF LOCALLY MATRIX ALGEBRAS}
\author{OKSANA BEZUSHCHAK}
\begin{document}

\maketitle

\address{Faculty of Mechanics and Mathematics, Taras Shevchenko National University of Kyiv, 
 60	Volodymyrska Street, Kyiv 01033, Ukraine}

\email{bezusch@univ.kiev.ua}

\begin{abstract}
We describe derivations and automorphisms of infinite tensor products of matrix algebras. Using this description we show that for a countable--dimensional locally matrix algebra $A$ over a field $\mathbb{F}$ the dimension of the Lie algebra of outer derivations of $A$ and the order of the group of outer automorphisms of $A$ are both  equal to $|\mathbb{F}|^{\aleph_0},$ where  $|\mathbb{F}|$ is the cardinality of the field $\mathbb{F}.$
	\end{abstract}

\subjclass{Mathematics Subject Classification 2020: 16W25, 16W20, 15A69} 

\keywords{Keyword: Locally matrix algebra; derivation; automorphism.}

\section{Introduction and Main Results}

We study derivations and automorphisms of countable--dimensional locally matrix algebras.

Let $\mathbb{F}$ be a ground  field. Following \cite{Kurosh}, we call an associative {$\mathbb{F}$--algebra} $A$ a {\it locally matrix algebra} if for each finite subset of $A$ there exists a subalgebra $B\subset A$  containing this subset such that  $B\cong M_n(\mathbb{F})$ for some $n.$ We call a  locally matrix algebra $A$ {\it  unital} if it contains a unit $1.$

 J.~G.~Glimm \cite{Glimm} proved that every countable--dimensional unital locally matrix algebra is uniquely determined by its  Steinitz  number. In \cite{BezOl,BezOl_2}, we showed that this is no longer true for unital locally matrix algebras of uncountable dimensions.

 S.~A.~Ayupov and K.~K.~Kudaybergenov \cite{AyKud}   constructed an outer derivation of the countable--dimensional unital locally matrix algebra of Steinitz number $2^{\infty}$  and used it as an example of an outer derivation in a von Neumann regular simple algebra. In \cite{Strade}, H.~Strade studied derivations of locally finite--dimensional locally simple Lie algebras over a field of characteristic $0.$

 Recall that a linear map $d:A\rightarrow A$ is called  a  {\it derivation} if $$d(xy) \ = \ d(x)y \ + \  x d(y)$$ for arbitrary elements $x,$ $y$ from $A.$

 For an element $a\in A$ the adjoint operator $$\text{ad}_A(a): \ A \ \rightarrow \ A,  \ \ \ \ x \ \mapsto \ [a,x],$$ is an {\it inner derivation} of the algebra $A.$

Let   $\text{Der} (A)$ be the  Lie algebra of all derivations of the algebra $A$ and let $\text{Inder}(A) $ be the ideal of all inner derivations. The factor algebra    $$\text{Outder} (A) \ = \  \text{Der}(A) \, \diagup \, \raisebox{-2pt}{\text{Inder}(A)} $$ is called  the algebra of {\it outer derivations} of  $A.$

Let  $\text{Aut} (A)$ and  $\text{Inn} (A)$ be  the group of automorphisms and the group of inner automorphisms of the algebra $A,$ respectively. The factor group $$\text{Out} (A) \ = \  \text{Aut} (A) \, \diagup \, \raisebox{-2pt}{\text{Inn}(A)}$$ is called the group of {\it outer automorphisms} of  $A.$

Along with automorphisms of the algebra $A$ we consider the semigroup $P(A)$ of injective endomorphisms (embeddings) of $A,$ \ $\text{Aut} (A)  \subseteq P(A).$

In Sec.~2, we consider the Tykhonoff topology on the set $\text{Map} (A,A)$ of all mappings $A \rightarrow A$ and  prove the following Theorem.

\begin{theorem} \label{T1_new} Let $A$ be a locally matrix algebra.
  \begin{enumerate}
 		\item[$(1)$] \label{T1_1} 	The ideal    $\emph{Inder}(A)$  is dense in $\emph{Der} (A)$  in the Tykhonoff topology.
 		\item[$(2)$]  Let the algebra $A$  contains $1.$ Then the completion of  $\emph{Inn} (A)$ in $\emph{Map} (A,A)$ in the   Tykhonoff topology is the semigroup $P(A).$ In particular,
 $\emph{Inn} (A)$ is dense in  $\emph{Aut} (A).$
 		 	\end{enumerate}
 \end{theorem}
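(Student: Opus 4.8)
The plan is to exploit that convergence in the Tykhonoff (product) topology on $\mathrm{Map}(A,A)$ is just pointwise convergence with $A$ carrying the discrete topology; hence a map $f$ lies in the closure of a set $S\subseteq\mathrm{Map}(A,A)$ precisely when, for every finite subset $x_1,\dots,x_k\in A$, some $g\in S$ satisfies $g(x_i)=f(x_i)$ for all $i$. Thus both density assertions reduce to a finite approximation problem, and the locally matrix structure lets me solve each such problem inside a single matrix subalgebra.

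For $(1)$, given a derivation $d$ and elements $x_1,\dots,x_k$, I would first choose a subalgebra $B\cong M_n(\mathbb{F})$ containing $x_1,\dots,x_k$, and then a larger subalgebra $C\cong M_N(\mathbb{F})$ containing $B$ together with the finitely many elements $d(b)$, where $b$ ranges over the matrix units of $B$. The restriction $d|_B$ is then a derivation of $B$ into the $B$-bimodule $C$. Since $M_n(\mathbb{F})$ is separable, every derivation of $B$ into any $B$-bimodule is inner; concretely, decomposing $C$ by the Peirce decomposition relative to the idempotent $1_B$ and treating each component separately produces an element $a\in C$ with $d(b)=[a,b]$ for all $b\in B$. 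Then $\mathrm{ad}_A(a)\in\mathrm{Inder}(A)$ agrees with $d$ on $x_1,\dots,x_k$, which proves density.

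For $(2)$ I would establish two opposite inclusions. First, every unital embedding $\phi\in P(A)$ lies in the closure of $\mathrm{Inn}(A)$: adjoining $1$ to the finite set and using that $1_B=1$ whenever $1\in B$, choose a unital $B\cong M_n(\mathbb{F})$ containing $x_1,\dots,x_k$, and then a unital $C\cong M_N(\mathbb{F})$ containing $B$ and $\phi(B)$. The inclusion $B\hookrightarrow C$ and $\phi|_B\colon B\to C$ are two unital embeddings of $M_n(\mathbb{F})$ into $M_N(\mathbb{F})$, hence conjugate by an invertible $u\in C$ by the Noether--Skolem theorem; since $C$ is unital with unit $1$, such $u$ is invertible in $A$, so the inner automorphism $x\mapsto uxu^{-1}$ agrees with $\phi$ on the $x_i$. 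Conversely, the closure of $\mathrm{Inn}(A)$ is contained in $P(A)$: a pointwise limit of algebra homomorphisms is again a homomorphism fixing $1$ (test the additive, scalar and multiplicative identities on the relevant finite sets), and because $A$ is a directed union of unital matrix subalgebras and hence simple, any such nonzero homomorphism is injective. This identifies the completion with $P(A)$, and restricting to $\mathrm{Aut}(A)\subseteq P(A)$ yields the density of $\mathrm{Inn}(A)$ in $\mathrm{Aut}(A)$.

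The main obstacle I anticipate is the correct bookkeeping of identity elements. In part $(1)$ the subalgebra $B$ need not contain the identity of $A$, so $C$ is only a non-unital $B$-bimodule and the innerness of $d|_B$ has to be assembled Peirce-component by Peirce-component with respect to $1_B$ rather than quoted directly. In part $(2)$ it is exactly the unitality of the embeddings that makes the images of $1$ have equal rank inside $C$, which is what permits the Noether--Skolem conjugation; the same phenomenon, read backwards, is what forces every pointwise limit of inner automorphisms to preserve $1$ and thus to be a genuine unital embedding.
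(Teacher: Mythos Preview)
Your proposal is correct and follows essentially the same route as the paper: for (1) you embed the finite set in a matrix subalgebra $B$, enlarge to $C\supseteq B+d(B)$, and invoke the vanishing of $H^1$ for the separable algebra $M_n(\mathbb{F})$ to make $d|_B$ inner; for (2) you run the Skolem--Noether argument inside a common matrix subalgebra $C\supseteq B\cup\phi(B)$, exactly as the paper does. Your extra remarks (the Peirce bookkeeping in (1), and the explicit check via simplicity of $A$ that pointwise limits of inner automorphisms are injective unital endomorphisms) simply spell out points the paper handles by citation or by asserting that $P(A)$ is closed.
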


In Sec.~3, we describe derivations of an infinite tensor product of matrix algebras.

Let $I$ be an infinite set and let $\mathcal{P}$ be a system of nonempty finite subsets of $I.$ We say that the system  $\mathcal{P}$ is \textit{sparse} if
\begin{enumerate}
  \item[$(1)$] for any $S\in \mathcal{P}$ all nonempty subsets of $S$ also lie in $\mathcal{P},$
  \item[$(2)$] an arbitrary element $i\in I$ lies in no more than finitely many subsets from $\mathcal{P}.$
\end{enumerate}

Let \begin{equation}\label{otimes_eq} \mathbf{A} \ = \ \otimes_{i\in I} \ A_i \end{equation}
and  all  algebras $A_i$ are finite--dimensional matrix algebras over  $\mathbb{F}$. For a subset $S=\{i_1,\ldots, i_r \}\subset I$  the subalgebra $$A_S : \ = \ A_{i_1} \ \otimes \  \cdots \ \otimes \ A_{i_r}$$  is a tensor factor of the algebra $\mathbf{A}.$

Let $\mathcal{P}$ be a  system of nonempty finite subsets of $I.$ Let $f_S,$ $S\in \mathcal{P} ,$ be a system of linear operators $\mathbf{A}\rightarrow \mathbf{A}.$ The sum
\begin{equation}\label{sum_eq1}
\sum_{S\in \mathcal{P}} \ f_S
\end{equation}
converges in the   Tykhonoff topology if for an arbitrary element $a\in \mathbf{A}$ the set $$\big\{ \ S \ \in \ \mathcal{P} \ | \ f_S(a) \ \neq \  0 \ \big\}$$ is finite. In this case, the operator $$a \ \mapsto \ \sum_{S\in \mathcal{P}} \ f_S(a)$$ is a linear operator. Moreover, if every summand $f_S$ is a derivation of the algebra $\mathbf{A}$ then the sum   (\ref{sum_eq1}) is also a derivation of the algebra $\mathbf{A}.$

Let $\mathcal{P}$ be a sparse system. For each subset $S\in \mathcal{P}$ choose an element $a_S \in A_S.$ The  sum
\begin{equation}\label{eq4}
\sum_{S\in \mathcal{P}} \text{ad}_{\mathbf{A}} (a_S)
\end{equation}
converges in the  Tykhonoff topology to a derivation of $\mathbf{A}.$ Indeed, choose an arbitrary element $a\in \mathbf{A}.$ Let $$a \ \in \ A_{i_1} \ \otimes \ \cdots \ \otimes \ A_{i_r}.$$ Because of the sparsity of the  system $\mathcal{P},$ for all but finitely many subsets $S\in \mathcal{P}$  we have $$\big\{i_1, \ \ldots, \ i_r\big\} \ \cap \ S \ = \ \emptyset, \ \ \ \ \text{and therefore} \ \ \ \ \text{ad}_{\mathbf{A}}(a_S)(a)\ = \ 0.$$ Let $D_{\mathcal{P}}$  be the vector space of all  sums (\ref{eq4}),  $D_{\mathcal{P}}\subseteq \text{Der} (\mathbf{A}).$

For each algebra $A_i,$  $i\in I,$ choose a subspace $A_i^0$ such that
\begin{equation}\label{eq5}
  A_i \ = \  \mathbb{F} \ \cdot \ 1_{A_i} \ \dot{+} \  A_i^0
\end{equation}
is a direct sum,  $1_{A_i}$ is a unit element of $A_i.$ Let $E_i$ be a basis of $A_i^0.$ For a subset $S= \{i_1,\ldots, i_r\}$ of the set $I$ let
$$ E_S: \ = \ E_{i_1} \ \otimes \ \cdots \ \otimes \ E_{i_r} \ = \ \{\ a_{1}  \otimes  \cdots  \otimes a_{r} \ | \ a_k \in  E_{i_k}, \ 1 \leq  k  \leq  r\ \} .$$
and $$\text{ad}_{\mathbf{A}}(E_S)\ =\ \{ \ \text{ad}_{\mathbf{A}}(e) \ | \ e \ \in \ E_S \ \}.$$

A description of derivations of the algebra (\ref{otimes_eq}) is given by the following Theorem.

\begin{theorem} \label{T2_new}
\begin{enumerate}
  \item[$(1)$] Suppose that the set $I$ is countable.  Then $$\emph{Der} (\mathbf{A}) \ = \ \bigcup_{\mathcal{P}} \ D_{\mathcal{P}},$$ where the union is taken over all sparse systems of subsets of $I.$
  \item[$(2)$] \label{L1} Let $I$ be an infinite (not necessarily countable) set. Let $\mathcal{P}$ be a sparse system of subsets of $I.$ Then the union of finite sets of operators  $$ \bigcup_{S\in\mathcal{P}} \ \emph{ad}_{\mathbf{A}}(E_S) $$  is a topological basis of $D_{\mathcal{P}}.$
\end{enumerate}
  \end{theorem}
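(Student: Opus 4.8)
\textit{Proof proposal.} The plan is to reduce both parts to understanding the single family of operators $\text{ad}_{\mathbf{A}}(e)$, $e\in E_S$, by exploiting the multigrading coming from the splitting (\ref{eq5}). For a finite $S\subset I$ the identity $A_S=\bigotimes_{i\in S}(\mathbb{F}\cdot 1_{A_i}\dot{+}A_i^0)$ expands into a vector-space direct sum $A_S=\bigoplus_{T\subseteq S}B_T$, where $B_T=\big(\bigotimes_{i\in T}A_i^0\big)\otimes\big(\bigotimes_{i\in S\setminus T}\mathbb{F}\cdot 1_{A_i}\big)$ and $E_T$ is a basis of $B_T$ after absorbing the scalar factors. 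The component $B_\emptyset=\mathbb{F}\cdot 1_{A_S}$ is central, so $\text{ad}_{\mathbf{A}}$ annihilates it; thus only nonempty supports $T$ contribute, and subset-closure of $\mathcal{P}$ guarantees that every such $T$ again lies in $\mathcal{P}$. I record once and for all the commutation principle: if $T\cap T'=\emptyset$ then $B_T$ and $B_{T'}$ lie in commuting tensor factors, so $[e,a]=0$ for $e\in E_T$, $a\in B_{T'}$.

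For part (2), topological spanning is obtained by decomposing each $a_S=\sum_{T\subseteq S}a_S^{(T)}$ into graded components and expanding each $a_S^{(T)}$, $T\neq\emptyset$, in the basis $E_T$; collecting coefficients over all $S$ yields $d=\sum_{T\in\mathcal{P}}\sum_{e\in E_T}c_e\,\text{ad}_{\mathbf{A}}(e)$, where each $c_e$ is a finite sum because sparsity forces only finitely many $S\in\mathcal{P}$ to contain a fixed $T$; the same sparsity argument shows the regrouped sum converges to $d$. The harder half is \emph{topological linear independence}. I would assume $\sum_{T,e}c_e\,\text{ad}_{\mathbf{A}}(e)=0$, fix a target $e_0\in E_{T_0}$, and test the operator on the finite matrix subalgebra $A_{T_0}$. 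By the commutation principle only the finitely many $e$ whose support meets $T_0$ survive, all of them lying in $A_J$ for the finite set $J=\bigcup\{T\in\mathcal{P}:T\cap T_0\neq\emptyset\}$; hence $u:=\sum_{T\cap T_0\neq\emptyset}\sum_{e\in E_T}c_e e$ centralizes $A_{T_0}$ inside the finite central simple algebra $A_J$. The double-centralizer theorem identifies this centralizer with $1_{A_{T_0}}\otimes A_{J\setminus T_0}=\bigoplus_{T\cap T_0=\emptyset}B_T$, whereas by construction $u\in\bigoplus_{T\cap T_0\neq\emptyset}B_T$; directness of the grading forces $u=0$, and linear independence of the basis elements $e$ then yields $c_e=0$ for every $e$ whose support meets $T_0$. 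As $T_0$ and $e_0$ were arbitrary, all coefficients vanish.

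For part (1) the inclusion $\bigcup_{\mathcal{P}}D_{\mathcal{P}}\subseteq\text{Der}(\mathbf{A})$ is already noted, so I must realize an arbitrary $d\in\text{Der}(\mathbf{A})$ as a sparse sum. Enumerating $I=\{i_1,i_2,\dots\}$, put $I_n=\{i_1,\dots,i_n\}$ and $A_n=A_{I_n}\cong M_{N_n}(\mathbb{F})$, so $\mathbf{A}=\bigcup_n A_n$. Since each $A_n$ is a finite-dimensional separable algebra, every derivation of $A_n$ into the bimodule $\mathbf{A}$ is inner, giving $b_n\in\mathbf{A}$ with $d|_{A_n}=\text{ad}_{\mathbf{A}}(b_n)|_{A_n}$. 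Comparing on $A_n\subseteq A_{n+1}$ gives $\text{ad}_{\mathbf{A}}(b_{n+1}-b_n)|_{A_n}=0$, so the increment $c_n:=b_{n+1}-b_n$ centralizes $A_n$; by the double-centralizer computation $c_n\in A_{I\setminus I_n}$, whence its finite support is disjoint from $I_n$. Writing $d=\sum_{n\geq 0}\text{ad}_{\mathbf{A}}(c_n)$ with $c_0=b_1$, I note that for $a\in A_m$ every term with $n\geq m$ kills $a$, so the sum is pointwise finite and converges to $d$.

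It remains to see that these increments form a sparse system, and the crucial point is automatic: a fixed generator $i_m$ can lie in $\text{supp}(c_n)\subseteq I\setminus I_n$ only for $n<m$, hence in at most finitely many supports. Taking $\mathcal{P}$ to be the family of all nonempty subsets of the finite sets $\text{supp}(c_n)$ makes $\mathcal{P}$ subset-closed and, by the previous sentence, locally finite, hence sparse; assigning to each $S\in\mathcal{P}$ the sum of those $c_n$ with $\text{supp}(c_n)=S$ (and $0$ otherwise) exhibits $d\in D_{\mathcal{P}}$. I expect the main obstacle to be packaging the centralizer computations cleanly so that the convergence of the regrouped sums, the vanishing of $u$, and the locality of the increments all rest on the same double-centralizer and grading facts; the separability input in part (1) is standard, and the sparsity of the telescoping increments, though the key idea, falls out of the support bound above.
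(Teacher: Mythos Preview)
Your proposal is correct and follows essentially the same strategy as the paper: in part~(1) both arguments approximate $d$ by inner derivations on an exhausting chain of finite matrix subalgebras and then observe that the successive corrections have supports receding past $\{i_1,\dots,i_{n-1}\}$, which is exactly what gives sparsity; in part~(2) both arguments expand along the grading $A_S=\bigoplus_{T\subseteq S}B_T$ for spanning and prove independence by testing on a finite tensor factor and invoking a centralizer computation. The only cosmetic differences are that the paper peels off one coordinate $A_{i_n}$ at a time (using Lemma~\ref{L3} to keep the complementary factor invariant) where you telescope $c_n=b_{n+1}-b_n$ over the whole block $A_{I_n}$, and the paper tests independence on a single factor $A_i$ (reducing to $Z(A_i)=\mathbb{F}\cdot 1$) where you test on all of $A_{T_0}$ and invoke the double centralizer in $A_J$; neither change affects the substance.
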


In Sec.~4, we prove the analog of the result of  H.~Strade \cite{Strade} for locally finite--dimensional locally simple Lie algebras.

\begin{theorem} \label{T3} Let $A$ be a countable--dimensional  locally matrix algebra. Then the Lie algebra  $\emph{Outder} (A)$ is not locally finite--dimensional.  \end{theorem}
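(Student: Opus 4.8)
The plan is to show that $\mathrm{Outder}(A)$ fails to be locally finite-dimensional by producing just \emph{two} outer derivations whose generated Lie subalgebra is infinite-dimensional; since local finite-dimensionality requires every finitely generated subalgebra to be finite-dimensional, one $2$-generated infinite-dimensional subalgebra is enough. First I would realize $A$ concretely: by Glimm's theorem a countable-dimensional unital locally matrix algebra is an infinite tensor product $\otimes_{i\ge1}M_{d_i}(\mathbb F)$, and by regrouping the factors I may assume $A=\otimes_{j\ge1}C_j$ with $C_j\cong M_{s_j}(\mathbb F)$ and $s_j\to\infty$ as fast as I wish. I would choose $s_j$ large enough that $C_j$ contains, in a corner, a faithful copy of the free $2$-generated nilpotent Lie algebra $N_j$ of class $j$, realized by strictly upper-triangular (hence nilpotent, nonscalar) matrices; this is possible over any field by Birkhoff's embedding theorem. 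Let $x_j,y_j\in C_j$ be the images of the two free generators.

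Next I would set $d_1=\sum_{j}\mathrm{ad}_A(x_j)$ and $d_2=\sum_j\mathrm{ad}_A(y_j)$. These are indexed by the sparse system of singletons $\{\,\{j\}\,\}$, so by the discussion preceding Theorem \ref{T2_new} both sums converge in the Tykhonoff topology to derivations of $A$. Because distinct tensor factors commute, the cross-terms $[\mathrm{ad}(x_j),\mathrm{ad}(y_k)]$ vanish for $j\ne k$, and since $\mathrm{ad}$ is a Lie homomorphism I obtain, for every Lie word $w$, the slot-wise formula $w(d_1,d_2)=\sum_j\mathrm{ad}_A\big(w(x_j,y_j)\big)$. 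Taking $w_c$ to be the left-normed commutator of degree $c\ge2$, I put $v_c:=w_c(d_1,d_2)$; each $v_c$ lies in the Lie subalgebra generated by $d_1$ and $d_2$.

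The decisive step is a criterion for innerness: a derivation $\sum_j\mathrm{ad}_A(z_j)$ with $z_j\in C_j$ is inner if and only if $z_j$ is scalar for all but finitely many $j$. Indeed, if it equals $\mathrm{ad}_A(a)$ with $a$ in a finite tensor factor $C_F$, then testing against $z\in C_j$ for $j\notin F$ forces $z_j$ central, i.e. scalar; the converse is clear. Applying this to $\sum_c\lambda_c v_c=\sum_j\mathrm{ad}_A\!\big(\sum_c\lambda_c w_c(x_j,y_j)\big)$: if this is inner, then for all large $j$ the element $\sum_c\lambda_c w_c(x_j,y_j)$ is scalar; being strictly upper-triangular it is nilpotent, hence $0$; and once $j$ exceeds the top degree, the relevant basic commutators are linearly independent in $N_j$, forcing every $\lambda_c=0$. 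Thus the images $\bar v_c$ are linearly independent in $\mathrm{Outder}(A)$, the subalgebra $\langle\bar d_1,\bar d_2\rangle$ is infinite-dimensional, and $\mathrm{Outder}(A)$ is not locally finite-dimensional.

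The main obstacle I anticipate is the case of a general, possibly non-unital, countable-dimensional locally matrix algebra, where the clean tensor decomposition is unavailable (for instance $M_\infty(\mathbb F)$, the algebra of finitary matrices). There I would replace the commuting tensor factors $C_j$ by a family of pairwise orthogonal matrix subalgebras $P_j\cong M_{s_j}(\mathbb F)$ of growing size sitting along an exhausting chain of $A$, run the identical bracket computation, and re-derive the innerness criterion in this setting. Establishing that criterion — in particular ruling out the infinite ``diagonal'' combinations of the block units $1_{P_j}$, which are no longer central — is the technically delicate point, whereas the free-nilpotent growth mechanism and the independence argument transfer verbatim.
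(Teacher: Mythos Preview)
Your unital argument is correct and genuinely different from the paper's. The paper keeps the original tensor factors and builds its two generators out of elements that straddle \emph{adjacent} factors, namely $z=\sum_i\mathrm{ad}\big(e_{12}(i)e_{11}(i+1)\big)$ and $y_1=\sum_i\mathrm{ad}\big(e_{12}(i)\big)$, and then verifies the single recursion $[z,y_k]=y_{k+1}$ by a matrix-unit computation; linear independence modulo $\mathrm{Inder}(A)$ is read off from the topological basis of Theorem~\ref{T2_new}(2). You instead regroup so that each factor is large, confine everything to the sparse system of singletons, and import the growth mechanism from the free nilpotent Lie algebras $N_j$. What your route buys is a clean slot-wise calculus and a one-line innerness test (``almost all $z_j$ scalar''); what the paper's route buys is that no regrouping or appeal to Birkhoff is needed, and the two generators are completely explicit. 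A small correction: the tensor decomposition is K\"othe's theorem, not Glimm's.

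The genuine gap is the non-unital case. Your orthogonal-blocks plan is exactly right for $M_\infty(\mathbb F)$, and there you only need the \emph{forward} direction of your innerness criterion (inner $\Rightarrow$ almost all $z_j$ central in $P_j$), which is immediate once the finitary $a$ has support disjoint from $P_j$; the worry about ``infinite diagonal combinations of the $1_{P_j}$'' concerns only the unneeded converse. But a general non-unital countable-dimensional locally matrix algebra need not be $M_\infty(\mathbb F)$: by Lemma~\ref{L4_new!} it either is $M_\infty(\mathbb F)$ or contains an idempotent $e$ with $eAe$ infinite-dimensional, and in the latter case there is no evident supply of pairwise \emph{orthogonal} matrix blocks of growing size inside $A$ (the tensor factors you get inside $eAe$ commute but are not orthogonal). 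The paper closes this case differently: it runs the unital argument in $eAe$, extends the two generating derivations to $A$ via Lemma~\ref{L6}, and then shows, by a Peirce decomposition of the conjugating element $u=eue+(1-e)ue+eu(1-e)+(1-e)u(1-e)$, that if the extended combination were inner in $A$ its restriction to $eAe$ would already be inner there. Plugging that reduction into your argument (or simply citing Lemma~\ref{L6} and this Peirce step) completes your proof.
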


In Sec.~5, we describe automorphisms and unital injective endomorphisms of a  countable--dimensional unital locally matrix algebra $A.$ Remark, that by the result of A.~G.~Kurosh (\cite{Kurosh},  Theorem 10) the semigroup $P(A)$ of  unital injective homomorphisms is strictly bigger than $\text{Aut} (A).$

The starting point here is Koethe's Theorem   \cite{Koethe} stating that every countable--dimensional unital locally matrix algebra $A$ is isomorphic to a countable tensor product of matrix algebras. Therefore
\begin{equation}\label{T_PR0}
  A\cong \otimes_{i=1}^{\infty} A_i, \quad A_i \cong M_{n_i} (\mathbb{F}), \quad i\geq 1.
\end{equation}

Let $H_n$ be the subgroup of the group $\text{Inn} (A)$ generated by conjugations by invertible elements from $$\otimes_{i\geq n} \ A_i.$$ Clearly, $$H_n \ \cong \ \text{Inn} \ \big(\otimes_{i\geq n} A_i \ \big) $$  and $$\text{Inn}(A) \ = \  H_1 \ >  \ H_2 \ > \ \cdots .$$ For each $n\geq 1$ choose a system of representatives of left cosets $hH_{n+1},$ $h\in H_n,$ and denote it as $\mathcal{X}_n.$ We assume that each $\mathcal{X}_n$ contains the identical automorphism.

For an arbitrary sequence of automorphisms $\varphi_n\in \mathcal{X}_n,$ $n\geq 1,$ the infinite product $\varphi=\varphi_1\varphi_2 \cdots$ converges in the Tykhonoff topology. Clearly, $\varphi\in P(A).$

\begin{theorem} \label{T4_new} An arbitrary unital injective endomorphism $\varphi \in P(A)$ can be uniquely represented as
\begin{equation*}\label{varphi}
\varphi \ = \ \varphi_1 \ \varphi_2 \ \cdots,
\end{equation*}
where $\varphi_i\in \mathcal{X}_i$ for each $i\geq 1.$  \end{theorem}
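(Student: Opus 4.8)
The plan is to build the factorization by induction, peeling off one tensor factor at a time, and to extract both existence and uniqueness from a single structural observation: the left coset $hH_{n+1}$ of an element $h\in H_n$ is completely determined by the restriction of $h$ to the $n$-th tensor factor. Throughout I write $A_{[1,n]}=A_1\otimes\cdots\otimes A_n$ and $B_n=\otimes_{i\ge n}A_i$, so that $A=A_{[1,n-1]}\otimes B_n$ and $H_n$ consists of the conjugations $\operatorname{ad}(u)$ by invertible $u\in B_n$; every such $\operatorname{ad}(u)$ fixes $A_{[1,n-1]}$ pointwise because $B_n$ centralizes it.

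First I would set up the coset--restriction dictionary. Since $A_n\cong M_{n_n}(\mathbb F)$ is central simple, the double centralizer theorem gives $C_{B_n}(A_n)=1\otimes B_{n+1}$, so an invertible $w\in B_n$ centralizes $A_n$ exactly when $w\in B_{n+1}^\times$. Consequently, for $u,v\in B_n^\times$ one has $\operatorname{ad}(u)|_{A_n}=\operatorname{ad}(v)|_{A_n}$ iff $v^{-1}u\in B_{n+1}^\times$ iff $\operatorname{ad}(u)H_{n+1}=\operatorname{ad}(v)H_{n+1}$, where the center $\mathbb F$ of $B_n$ is absorbed into $B_{n+1}^\times$. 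Hence $\varphi_n\mapsto\varphi_n|_{A_n}$ is a \emph{bijection} from $\mathcal X_n$ onto the set of unital embeddings $\{\operatorname{ad}(u)|_{A_n}:u\in B_n^\times\}$ of $A_n$ into $B_n$. This one fact will force uniqueness.

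Next comes the inductive peeling for existence. Assume $\psi_n:=(\varphi_1\cdots\varphi_{n-1})^{-1}\varphi$ fixes $A_{[1,n-1]}$ pointwise (the base case $n=1$ is trivial since $\varphi$ is unital). As $\psi_n$ is a unital embedding fixing the central simple subalgebra $A_{[1,n-1]}$, it maps its centralizer $B_n$ into itself, so $\psi_n=\operatorname{id}_{A_{[1,n-1]}}\otimes\psi_n'$ with $\psi_n'\in P(B_n)$. By Theorem \ref{T1_new}(2) applied to $B_n$ we may write $\psi_n'=\lim_k\operatorname{ad}(u_k)$ pointwise; because $A_n$ is finite-dimensional and the Tykhonoff topology is pointwise convergence with $A$ discrete, $\psi_n'|_{A_n}=\operatorname{ad}(u_{k_0})|_{A_n}$ for a single $u_{k_0}\in B_n^\times$. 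The dictionary then yields a unique $\varphi_n\in\mathcal X_n$ with $\varphi_n|_{A_n}=\psi_n|_{A_n}$, and $\psi_{n+1}:=\varphi_n^{-1}\psi_n=(\varphi_1\cdots\varphi_n)^{-1}\varphi$ fixes $A_{[1,n]}$ pointwise, closing the induction. To identify the product with $\varphi$, note that for $a\in A_{[1,m]}$ the tail $\varphi_{m+1}\cdots\varphi_N$ lies in $H_{m+1}$ and hence fixes $a$, so $\varphi_1\cdots\varphi_N(a)=\varphi_1\cdots\varphi_m(a)=\varphi(a)$ for all $N\ge m$ (the last equality being that $\psi_{m+1}$ fixes $A_{[1,m]}$); since $A=\bigcup_m A_{[1,m]}$, the infinite product equals $\varphi$. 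For uniqueness, two representations both satisfy $\varphi|_{A_{[1,n]}}=(\varphi_1\cdots\varphi_n)|_{A_{[1,n]}}$, and an induction using injectivity of the partial products reduces to $\varphi_n|_{A_n}=\varphi_n'|_{A_n}$ at each stage, whence $\varphi_n=\varphi_n'$ by the dictionary.

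The main obstacle is the limit argument: Theorem \ref{T1_new}(2) only asserts that $\psi_n'$ is a pointwise \emph{limit} of inner automorphisms of $B_n$, and I must upgrade this to agreement with a \emph{single} conjugation $\operatorname{ad}(u_{k_0})$ on the finite-dimensional factor $A_n$. This is precisely where finite-dimensionality of $A_n$ and the pointwise (discrete) nature of the topology are essential. The supporting ingredient is the clean splitting $\theta=\operatorname{id}_{A_{[1,n]}}\otimes\theta'$ for any $\theta\in P(A)$ fixing $A_{[1,n]}$ pointwise, which rests on the centralizer computation and guarantees that the peeled-off map $\psi_{n+1}$ remains in a form where Theorem \ref{T1_new}(2) reapplies; everything else is centralizer bookkeeping.
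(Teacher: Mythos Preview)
Your argument is correct and follows essentially the same route as the paper: inductive peeling of one tensor factor at a time, using the centralizer identity $C_{B_n}(A_n)=B_{n+1}$ (Lemma~\ref{L2}) to keep the residual map inside $B_n$, and the coset--restriction correspondence to pin down $\varphi_n\in\mathcal X_n$ uniquely. The only cosmetic difference is that the paper invokes the Skolem--Noether theorem directly to find the conjugating element on $A_n$, whereas you route the same step through Theorem~\ref{T1_new}(2) (whose proof \emph{is} Skolem--Noether) and then use finite-dimensionality of $A_n$ plus discreteness to extract a single $u_{k_0}$; this is a packaging choice, not a different idea.
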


We call a sequence of automorphisms $\varphi_n\in H_n,$ $n\geq 1,$ \emph{integrable} if for an arbitrary element $a\in A$ the subspace spanned by all elements $$ \varphi_n \ \varphi_{n-1} \ \cdots \ \varphi_1(a), \ \ \  \ n \geq 1 ,$$ is finite--dimensional.

\begin{theorem} \label{T5_new} An  injective endomorphism  $$\varphi \ = \ \varphi_1 \ \varphi_2 \ \cdots, \ \ \ \ \text{where} \ \ \ \  \varphi_n\in H_n,  \ \ \ \ n\geq 1,$$ is an automorphism if and only if the sequence
\begin{equation}\label{EQ_integrable}
 \big\{ \ \varphi_n^{-1} \ \big\}_{n\geq 1}
\end{equation}
is integrable. \end{theorem}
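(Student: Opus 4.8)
The plan is to reduce the statement to an elementary claim about the partial inverses and to exploit a single structural fact: each $\varphi_n\in H_n$, being (a product of, hence) conjugation by an invertible element of $\otimes_{i\geq n}A_i$, fixes the tensor factor $B_{n-1}=A_1\otimes\cdots\otimes A_{n-1}$ pointwise, since the two tensor blocks commute. Writing $B_n=A_1\otimes\cdots\otimes A_n$ and $\Phi_n=\varphi_1\cdots\varphi_n$, I would first record that $A=\bigcup_n B_n$ and that for $a\in B_N$ one has $\varphi_n(a)=a$ for all $n>N$; consequently the defining product stabilizes and $\varphi(a)=\Phi_N(a)$. The same remark applies verbatim to the inverses $\varphi_n^{-1}\in H_n$.

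The core is the following equivalence, to be established for each fixed $b\in A$: with $c_n=\Phi_n^{-1}(b)=\varphi_n^{-1}\cdots\varphi_1^{-1}(b)$, the element $b$ lies in $\varphi(A)$ if and only if the sequence $(c_n)$ is eventually constant. For one direction, if $b=\varphi(a)$ with $a\in B_K$, then $\Phi_n(a)=\varphi(a)=b$ for all $n\geq K$, so $c_n=a$ for $n\geq K$. For the other, if $c_n=a$ for all large $n$, then $a\in B_M$ for some $M$, and taking $n\geq M$ large enough that $c_n=a$ gives $\varphi(a)=\Phi_M(a)=\Phi_n(a)=\Phi_n(c_n)=b$, so $b\in\varphi(A)$.

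Next I would upgrade ``eventually constant'' to ``finite-dimensional span'', which is precisely integrability of $\{\varphi_n^{-1}\}$ tested at $b$. One implication is trivial, since an eventually constant sequence takes only finitely many values. For the converse, finite-dimensionality of $V=\operatorname{span}\{c_n:n\geq 1\}$ forces $V\subseteq B_M$ for some $M$ (a finite basis of $V$ lies in a single member of the chain $B_1\subseteq B_2\subseteq\cdots$); in particular $c_M\in B_M$. Using the recursion $c_n=\varphi_n^{-1}(c_{n-1})$ together with the fact that $\varphi_n^{-1}$ fixes $B_{n-1}\supseteq B_M$ pointwise, an immediate induction yields $c_n=c_M$ for all $n\geq M$, i.e. the sequence is eventually constant.

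Combining these, $\varphi$ is surjective exactly when for every $b\in A$ the span of $\{\varphi_n^{-1}\cdots\varphi_1^{-1}(b)\}$ is finite-dimensional, that is, exactly when $\{\varphi_n^{-1}\}$ is integrable; since $\varphi$ is injective by hypothesis, surjectivity coincides with being an automorphism. I expect the only step requiring an idea rather than bookkeeping to be the passage from finite-dimensional span to eventual constancy, where the commutation of $H_n$ with the initial tensor factors is essential; everything else is routine manipulation of the stabilizing products $\Phi_n$ in the Tykhonoff topology.
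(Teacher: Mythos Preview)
Your argument is correct and follows essentially the same route as the paper: both hinge on the stabilization $\varphi|_{B_n}=\Phi_n|_{B_n}$ (coming from the fact that $\varphi_k\in H_k$ fixes $B_{k-1}$ pointwise) and on the observation that a finite-dimensional span of the inverse orbit must sit in some $B_M$, after which the tail of the product acts trivially. Your presentation is organized pointwise around the sequence $c_n=\Phi_n^{-1}(b)$ and makes the intermediate ``eventually constant'' step explicit, whereas the paper phrases the same computation at the level of the subalgebras $A_{[1,p]}$ and $A_{[1,q]}$; the content is the same.
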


\begin{example}\label{example1} In each $A_i,$ $i\geq 1,$ choose an invertible element $a_i.$ Let $\hat{a_i}$ be the conjugation automorphism by $a_i.$ Then the sequence $$\big\{ \ \hat{a_i}^{-1} \ \big\}_{i\geq 1}$$ is integrable.
\end{example}

\begin{example}\label{example2} Let $e_{pq}$ denote a matrix unit. Let  $A_i \cong M_{n_i}(\mathbb{F}),$ $i\geq 1,$ and assume that $1\leq p,\ q\leq n_i$ so that $e_{pq}$ can be thought of as a matrix unit of $M_{n_i}(\mathbb{F}).$ Denote $$e_{pq}(i) \ = \ \underbrace{1\otimes \cdots \otimes 1 \otimes e_{pq}}_i  \otimes  1  \otimes  \cdots \ \in \ A_i \ \subset \ A.$$  Let $$a_i \ = \ e_{11}(i) \ e_{12}(i+1).$$ Clearly, $a_i^2=0.$ Let $\phi_i$ denote the conjugation by the element $(1+ a_i)^{-1}.$ The sequence
\begin{equation}\label{EQ_4-0}
\big\{ \ \phi_i^{-1} \ \big\}_{i\geq 1}
\end{equation}
is not integrable. Hence, $\phi = \phi_1 \phi_2 \cdots$ is an injective endomorphism that is not an  automorphism. \end{example}

\begin{remark} This example provides another proof of   Theorem $10$ \emph{\cite{Kurosh}} of {A.~G.~Kurosh.} \end{remark}

In Sec.~6, we determine dimensions of Lie algebras $\text{Der} (A)$ and  \\ $\text{Outder} (A)$ and orders of groups $\text{Aut} (A)$ and $\text{Out} (A),$ where $A$ is a countable--dimen\-sio\-nal   locally matrix algebra.

We denote the cardinality of a set $X$  as $|X|.$ For  two sets $X$ and $Y$ let  $\text{Map}(Y,X)$ denote the set of all mappings from $Y$ to $X.$ Given two cardinals $\alpha,$ $\beta$ and sets $X,$ $Y$ such that $|X|=\alpha,$ $|Y|=\beta$ we define  $\alpha^{\beta} = |\text{Map}(Y,X)|.$ As always $\aleph_0$ stands for the countable cardinality.

\begin{theorem}\label{T6_new_P1}
Let $\mathbf{A}= \otimes_{i\in I} A_i,$ where $I$ is an infinite  set and each algebra $A_i$ is a  matrix algebra over a field $\mathbb{F}$ of the dimension $>1.$ Then
\begin{equation}\label{eq1}
\dim_{\mathbb{F}} \emph{Der} (\mathbf{A}) \ = \  \dim_{\mathbb{F}} \emph{Outder} (\mathbf{A}) \ = \ | \, \mathbb{F} \, |^{|I|}.
\end{equation}
\end{theorem}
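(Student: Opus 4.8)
The plan is to sandwich the common value between $|\mathbb{F}|^{|I|}$ from above and below, and in fact to prove the lower bound already at the level of $\text{Outder}(\mathbf{A})$; since $\text{Outder}(\mathbf{A})$ is a quotient of $\text{Der}(\mathbf{A})$, once I have $\dim_{\mathbb{F}}\text{Der}(\mathbf{A})\leq|\mathbb{F}|^{|I|}$ and $\dim_{\mathbb{F}}\text{Outder}(\mathbf{A})\geq|\mathbb{F}|^{|I|}$ all three quantities must coincide. The first preliminary step is to record that $\dim_{\mathbb{F}}\mathbf{A}=|I|$: using the decomposition (\ref{eq5}), a basis of $\mathbf{A}$ is indexed by finite subsets $S\subset I$ together with an element of $E_S$, so $\dim_{\mathbb{F}}\mathbf{A}=\sum_{S\text{ finite}}|E_S|$ is a sum of $|I|$ finite terms, hence equal to $|I|$.

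For the upper bound I would embed $\text{Der}(\mathbf{A})\subseteq\text{End}_{\mathbb{F}}(\mathbf{A})$ and compute the dimension of the endomorphism algebra by cardinal arithmetic. Since $\dim_{\mathbb{F}}\mathbf{A}=|I|$ is infinite, $|\mathbf{A}|=\max(|I|,|\mathbb{F}|)$, and an endomorphism is determined by its values on a basis, so $|\text{End}_{\mathbb{F}}(\mathbf{A})|=\max(|I|,|\mathbb{F}|)^{|I|}$. Using $|I|^{|I|}=2^{|I|}\leq|\mathbb{F}|^{|I|}$ one checks $\max(|I|,|\mathbb{F}|)^{|I|}=|\mathbb{F}|^{|I|}$, and since this cardinal exceeds $|\mathbb{F}|$, the dimension of $\text{End}_{\mathbb{F}}(\mathbf{A})$ equals its cardinality $|\mathbb{F}|^{|I|}$. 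Hence $\dim_{\mathbb{F}}\text{Der}(\mathbf{A})\leq|\mathbb{F}|^{|I|}$.

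For the lower bound I would use the sparse system of singletons $\mathcal{P}=\{\{i\}:i\in I\}$. By the convergence established before Theorem~\ref{T2_new}, every family $(a_i)_{i\in I}\in\prod_{i\in I}A_i^0$ yields a derivation $\Phi((a_i))=\sum_{i\in I}\text{ad}_{\mathbf{A}}(a_i)$, and $\Phi$ is $\mathbb{F}$-linear. It is injective: evaluating $\sum_i\text{ad}_{\mathbf{A}}(a_i)$ on $x\in A_j$ leaves only the term $[a_j,x]$ because distinct tensor factors commute, so if the derivation vanishes then each $a_j$ is central in $A_j=M_{n_j}(\mathbb{F})$, forcing $a_j\in(\mathbb{F}\cdot 1)\cap A_j^0=0$. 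As each $A_j^0$ contains a one-dimensional subspace, $\prod_{i\in I}A_i^0$ contains a copy of $\mathbb{F}^I$, whose dimension is $|\mathbb{F}|^{|I|}$ by the Erd\H{o}s--Kaplansky theorem.

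Finally, to pass to outer derivations I would pin down which families give inner derivations. If $\sum_i\text{ad}_{\mathbf{A}}(a_i)=\text{ad}_{\mathbf{A}}(b)$ with $b$ supported on a finite set $T$, then evaluating on $A_j$ for $j\notin T$ shows $a_j=0$; conversely a finitely supported family is plainly inner. Hence $\Phi^{-1}(\text{Inder}(\mathbf{A}))=\bigoplus_{i\in I}A_i^0$, a subspace of dimension $|I|$. Therefore $\Phi$ induces an injection of $(\prod_i A_i^0)/(\bigoplus_i A_i^0)$ into $\text{Outder}(\mathbf{A})$, and since the numerator has dimension $|\mathbb{F}|^{|I|}$ while the denominator has the strictly smaller dimension $|I|$, the quotient has dimension $|\mathbb{F}|^{|I|}$. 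This yields $\dim_{\mathbb{F}}\text{Outder}(\mathbf{A})\geq|\mathbb{F}|^{|I|}$ and closes the chain of inequalities. The main delicate point is the cardinal bookkeeping---verifying $\max(|I|,|\mathbb{F}|)^{|I|}=|\mathbb{F}|^{|I|}$, that an infinite-dimensional space has dimension equal to its cardinality, and that $\dim(V/W)=\dim V$ once $\dim W<\dim V$---together with the correct invocation of Erd\H{o}s--Kaplansky to identify $\dim_{\mathbb{F}}\mathbb{F}^{I}$.
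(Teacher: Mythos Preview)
Your proof is correct and follows the same strategy as the paper's: bound $\dim_{\mathbb{F}}\text{Der}(\mathbf{A})$ above by $|\text{End}_{\mathbb{F}}(\mathbf{A})|=|\mathbb{F}|^{|I|}$, inject $\mathbb{F}^I$ into $D_{\mathcal{P}}$ for the singleton sparse system $\mathcal{P}$ and invoke Erd\H{o}s--Kaplansky for the lower bound (the paper cites Theorem~\ref{T2_new}(2) for injectivity where you argue directly), then use $\dim_{\mathbb{F}}\text{Inder}(\mathbf{A})=|I|<|\mathbb{F}|^{|I|}$ to pass to $\text{Outder}(\mathbf{A})$. One inessential slip: the claim that $|\mathbb{F}|^{|I|}$ strictly exceeds $|\mathbb{F}|$ can fail (e.g.\ $|\mathbb{F}|=2^{\aleph_0}$, $|I|=\aleph_0$), but this is harmless since the upper bound only requires the trivial inequality $\dim_{\mathbb{F}}V\le|V|$.
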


 \begin{theorem} \label{T7_new_T2} 
  Let $A$ be a countable--dimensional  locally matrix algebra over a field $\mathbb{F}$. Then
 \begin{equation}\label{eq2}
\dim_{\mathbb{F}} \emph{Der} (A) \ = \  \dim_{\mathbb{F}} \emph{Outder} (A) \ = \ | \, \mathbb{F} \, |^{\aleph_0}.
\end{equation}  \end{theorem}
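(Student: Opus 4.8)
The plan is to trap both $\dim_{\mathbb F}\mathrm{Der}(A)$ and $\dim_{\mathbb F}\mathrm{Outder}(A)$ between $|\mathbb F|^{\aleph_0}$ from above and below. Since $\mathrm{Outder}(A)=\mathrm{Der}(A)/\mathrm{Inder}(A)$, one always has $\dim_{\mathbb F}\mathrm{Outder}(A)\le \dim_{\mathbb F}\mathrm{Der}(A)$, so it suffices to prove the upper bound $\dim_{\mathbb F}\mathrm{Der}(A)\le |\mathbb F|^{\aleph_0}$ and the lower bound $\dim_{\mathbb F}\mathrm{Outder}(A)\ge |\mathbb F|^{\aleph_0}$; the two inequalities then squeeze all three cardinals to $|\mathbb F|^{\aleph_0}$.

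The upper bound is a cardinality count valid for every $A$ with $\dim_{\mathbb F}A=\aleph_0$. Fixing a countable basis identifies a linear map $A\to A$ with the sequence of images of the basis vectors, whence $|\mathrm{Hom}_{\mathbb F}(A,A)|=|A|^{\aleph_0}=|\mathbb F|^{\aleph_0}$. Because $\mathrm{Der}(A)\subseteq \mathrm{Hom}_{\mathbb F}(A,A)$ and $\dim_{\mathbb F}V\le |V|$ for any space $V$ (a basis is a subset), we get $\dim_{\mathbb F}\mathrm{Der}(A)\le |\mathbb F|^{\aleph_0}$.

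For the lower bound I would split into two cases. If $A$ is unital, Koethe's theorem gives $A\cong \otimes_{i=1}^{\infty}A_i$ with $A_i\cong M_{n_i}(\mathbb F)$; since $A$ is infinite--dimensional, infinitely many $n_i$ exceed $1$, so after regrouping consecutive tensor factors (a tensor product of matrix algebras is again a matrix algebra) I may assume every factor has dimension $>1$, and Theorem~\ref{T6_new_P1} applied with $|I|=\aleph_0$ yields $\dim_{\mathbb F}\mathrm{Outder}(A)=|\mathbb F|^{\aleph_0}$ at once. The substantive case is $A$ non-unital. Write $A=\bigcup_k B_k$ with $B_1\subset B_2\subset\cdots$, $B_k\cong M_{n_k}(\mathbb F)$, and set $u_k=1_{B_k}$, so that $u_ku_l=u_{\min(k,l)}$ and the differences $f_k=u_k-u_{k-1}$ (with $u_0=0$) are pairwise orthogonal idempotents; non-unitality lets me pass to a subchain in which $u_k\neq u_{k-1}$, i.e.\ all $f_k\neq 0$. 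For $a\in B_N$ and $k>N$ one has $f_ka=af_k=0$, so for every $\lambda=(\lambda_k)\in\mathbb F^{\aleph_0}$ the sum $D_\lambda=\sum_k\lambda_k\,\mathrm{ad}_A(f_k)$ converges in the Tykhonoff topology and is a derivation, and $\lambda\mapsto [D_\lambda]\in\mathrm{Outder}(A)$ is $\mathbb F$-linear.

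The main obstacle is to compute the kernel of this last map and to show it is exactly the $\aleph_0$-dimensional subspace of eventually--constant sequences. On one hand $D_{\mathbf 1}=0$, since on each $B_N$ the operator $\sum_k\mathrm{ad}_A(f_k)$ equals $\mathrm{ad}_A(u_N)=0$; combined with the fact that a finitely--supported $\lambda$ produces the inner derivation $\mathrm{ad}_A(\sum_k\lambda_kf_k)$, this places every eventually--constant $\lambda$ in the kernel. Conversely, if $D_\lambda=\mathrm{ad}_A(a)$ with $a\in B_M$, then on each $B_k$ with $k\ge M$ the derivation $D_\lambda$ coincides with $\mathrm{ad}_{B_k}(\sum_{j\le k}\lambda_jf_j)$, so $a-\sum_{j\le k}\lambda_jf_j$ lies in the centre $\mathbb F\,u_k$ of $B_k$; reading off the $f_j$-diagonal components for $j>M$, where $f_jaf_j=0$, forces $\lambda_j$ to be constant for $j>M$. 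Hence the kernel is precisely the eventually--constant sequences. By the Erd\H{o}s--Kaplansky theorem $\dim_{\mathbb F}\mathbb F^{\aleph_0}=|\mathbb F|^{\aleph_0}>\aleph_0$, so the image has dimension $|\mathbb F|^{\aleph_0}$, giving $\dim_{\mathbb F}\mathrm{Outder}(A)\ge |\mathbb F|^{\aleph_0}$. The genuinely delicate point beyond this computation is that the two cases really do need different constructions: when $A$ is unital its global unit eventually equals every $u_k$, the idempotents $f_k$ vanish from some point on, and one is forced to exploit the commuting tensor factors of Theorem~\ref{T6_new_P1} rather than a nested chain of corners.
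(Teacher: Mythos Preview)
Your argument is correct. The upper bound and the unital case coincide with the paper's proof. In the non-unital case you take a genuinely different and more economical route. The paper splits into two sub-cases: either some corner $eAe$ is infinite-dimensional, in which case Lemma~\ref{L6} (extension of derivations from $eAe$ to $A$) reduces to the unital situation, or every corner is finite-dimensional, in which case Lemma~\ref{L4_new!} forces $A\cong M_\infty(\mathbb F)$ and the paper uses the infinite diagonal matrices $d_f=\mathrm{diag}(0,f(1),f(2),\dots)$ together with Erd\H{o}s--Kaplansky. Your construction with the orthogonal idempotents $f_k=u_k-u_{k-1}$ handles every non-unital $A$ at once and is essentially the intrinsic version of the paper's $M_\infty$ trick (your $f_k$ play the role of the diagonal matrix units $e_{kk}$); the kernel computation you give---$a-\sum_{j\le k}\lambda_jf_j\in\mathbb F\,u_k$ and then projecting by $f_j$ for $j>M$---is exactly right and pins down the eventually-constant sequences. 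The gain of your route is that Lemmas~\ref{L6} and~\ref{L4_new!} become unnecessary for this theorem; the paper's route, on the other hand, simply reuses infrastructure it has already built for Theorems~\ref{T3} and~\ref{T8_new_P1}, so within the paper the case split carries no extra cost.
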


\begin{remark} For many uncountable cardinals $|\mathbb{F}|$ we have $|\mathbb{F}|^{\aleph_0} =|\mathbb{F}|.$ For example, this is the case when $|\mathbb{F}|=\lambda^{\mu}$ is a power of cardinals and $\mu \geq \aleph_0 .$ If $\mathbb{F}=\mathbb{F}_0 (z,z^{-1})$ is the field  of Laurent series over some field $\mathbb{F}_0,$ or its algebraic extension, then $|\mathbb{F}|=|\mathbb{F}_0|^{\aleph_0}, $ and therefore $|\mathbb{F}|^{\aleph_0}=|\mathbb{F}|. $
 \end{remark}

  \begin{remark} A locally matrix   algebra $A$ over a field of zero characteristic gives rise to the locally finite--dimensional  locally simple Lie algebra $L=[A,A].$ Moreover, there are embeddings $$\emph{Der} (A) \ \rightarrow \ \emph{Der} (L),\ \ \ \ \emph{Inder} (A) \ \rightarrow \ \emph{Inder} (L), $$ $$ \emph{Outder} (A) \ \rightarrow \ \emph{Outder} (L),$$ which imply  $$\dim_{\mathbb{F}} \emph{Outder} (L) \ \geq \ \dim_{\mathbb{F}} \emph{Outder} (A).$$  Therefore, Theorem \emph{\ref{T7_new_T2}} contradicts Theorem $3.2$ from \emph{\cite{Strade}}. \end{remark}

\begin{theorem}\label{T8_new_P1}
Let $A$ be a countable--dimensional  locally matrix algebra over a field $\mathbb{F}.$ Then
\begin{equation}
|\, \emph{Aut} (A)\, | \ = \ | \, \emph{Out}(A)\, |  \ = \ | \, \mathbb{F}|^{\aleph_0} .
\end{equation}
\end{theorem}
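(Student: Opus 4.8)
The plan is to prove the single chain
\[
|\mathbb{F}|^{\aleph_0}\ \le\ |\text{Out}(A)|\ \le\ |\text{Aut}(A)|\ \le\ |\mathbb{F}|^{\aleph_0},
\]
from which all three cardinalities collapse to $|\mathbb{F}|^{\aleph_0}$. The middle inequality is automatic, since $\text{Out}(A)$ is a quotient of $\text{Aut}(A)$. Hence only the two outer bounds demand work: an upper bound $|\text{Aut}(A)|\le|\mathbb{F}|^{\aleph_0}$, which holds for any countable--dimensional $A$, and a lower bound $|\text{Out}(A)|\ge|\mathbb{F}|^{\aleph_0}$. For the latter I work in the unital case, where Koethe's decomposition (\ref{T_PR0}) gives $A\cong\otimes_{i\ge1}A_i$; the possibly non--unital case I reduce to the unital one as in the treatment of Theorem \ref{T7_new_T2}.

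For the upper bound, fix a countable $\mathbb{F}$--basis $\{v_n\}_{n\ge1}$ of $A$. An endomorphism, in particular an automorphism, is determined by the sequence of images $(\varphi(v_n))_n\in A^{\aleph_0}$, so $|\text{Aut}(A)|\le|A|^{\aleph_0}$. Since $\dim_{\mathbb{F}}A=\aleph_0$ we have $|A|=\max(|\mathbb{F}|,\aleph_0)$, whence $|A|^{\aleph_0}=\max(|\mathbb{F}|,\aleph_0)^{\aleph_0}=|\mathbb{F}|^{\aleph_0}$, using $\aleph_0^{\aleph_0}=2^{\aleph_0}=|\mathbb{F}|^{\aleph_0}$ when $\mathbb{F}$ is finite. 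This gives $|\text{Aut}(A)|\le|\mathbb{F}|^{\aleph_0}$ and therefore also $|\text{Out}(A)|\le|\mathbb{F}|^{\aleph_0}$.

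For the lower bound I construct $|\mathbb{F}|^{\aleph_0}$ pairwise inner--inequivalent automorphisms. As $A$ is infinite--dimensional, infinitely many $n_i$ exceed $1$; I use only those factors (conjugating the others trivially) and relabel them by $\mathbb{N}$, so each $A_i$ contains the unipotent $u_i(\lambda)=1_{A_i}+\lambda e_{12}$, $\lambda\in\mathbb{F}$, with $u_i(\lambda)$ and $u_i(\lambda')$ non--proportional for $\lambda\ne\lambda'$. Fix a partition $\mathbb{N}=\bigsqcup_{k\ge1}S_k$ into infinitely many infinite blocks, and for $\lambda=(\lambda_k)_{k\ge1}\in\mathbb{F}^{\aleph_0}$ let $\varphi_\lambda$ be the product over all $i$ of the conjugation of $A_i$ by $u_i(\lambda_{k(i)})$, where $k(i)$ is the block containing $i$. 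By Example \ref{example1} and Theorem \ref{T5_new} each such product of single--factor conjugations is an automorphism. To see that $\lambda\mapsto[\varphi_\lambda]\in\text{Out}(A)$ is injective, I use that every inner automorphism of $A$ is conjugation by an invertible element of some finite tensor factor $A_1\otimes\cdots\otimes A_m$ and so acts trivially on $A_i$ for $i>m$. Since conjugations in distinct factors commute, $\varphi_\lambda\varphi_{\lambda'}^{-1}$ acts on each $A_i$ as conjugation by $u_i(\lambda_{k(i)}-\lambda'_{k(i)})$, which is inner exactly when it is trivial on all but finitely many factors, i.e. when $\lambda_{k(i)}=\lambda'_{k(i)}$ for all large $i$. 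If $\lambda\ne\lambda'$, say $\lambda_{k_0}\ne\lambda'_{k_0}$, the action differs on every factor indexed by the infinite block $S_{k_0}$, so $\varphi_\lambda\varphi_{\lambda'}^{-1}$ is not inner. Thus distinct $\lambda$ give distinct outer classes, and $|\text{Out}(A)|\ge|\mathbb{F}^{\aleph_0}|=|\mathbb{F}|^{\aleph_0}$, closing the chain.

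The main obstacle is precisely this injectivity into $\text{Out}(A)$. The naive construction, conjugating the $i$--th factor by $u_i(\lambda_i)$, separates $\lambda$ and $\lambda'$ only modulo finitely--supported differences, so it realizes the quotient $\mathbb{F}^{\aleph_0}/\mathbb{F}^{(\aleph_0)}$; computing the cardinality of that quotient is delicate exactly when $|\mathbb{F}|^{\aleph_0}=|\mathbb{F}|$ (for instance $|\mathbb{F}|=2^{\aleph_0}$), where one cannot simply divide cardinals. Spreading each coordinate $\lambda_k$ over an entire infinite block $S_k$ makes any single change propagate to infinitely many factors, which forces outer--inequivalence and bypasses the cardinal arithmetic entirely. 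A secondary point needing care is the reduction of the non--unital case to the unital one, where the decomposition (\ref{T_PR0}) and Theorems \ref{T5_new} are available.
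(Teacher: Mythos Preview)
Your argument in the unital case is correct and in fact handles $|\text{Out}(A)|$ more carefully than the paper does. The paper simply injects $\prod_{i\ge1} PGL(n_i,\mathbb{F})$ into $\text{Aut}(A)$ via factorwise conjugation (Example~\ref{example1}, Theorem~\ref{T5_new}) and bounds above by $|\text{Lin}(A)|$, obtaining $|\text{Aut}(A)|=|\mathbb{F}|^{\aleph_0}$; the equality for $\text{Out}(A)$ is left implicit, presumably via $|\text{Inn}(A)|<|\mathbb{F}|^{\aleph_0}$. But, exactly as you note, that strict inequality may fail: when $|\mathbb{F}|=2^{\aleph_0}$ one has $|\text{Inn}(A)|=|\mathbb{F}|=|\mathbb{F}|^{\aleph_0}$, and coset counting says nothing about $|\text{Out}(A)|$. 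Your block-spreading device---repeating each coordinate $\lambda_k$ along an entire infinite set $S_k$ of tensor indices---forces any single discrepancy $\lambda_{k_0}\neq\lambda'_{k_0}$ to be visible on infinitely many factors $A_i$, while any inner automorphism, being conjugation by an element of some $A_1\otimes\cdots\otimes A_m$, is trivial on $A_i$ for $i>m$. This injects $\mathbb{F}^{\aleph_0}$ directly into $\text{Out}(A)$ and bypasses the cardinal arithmetic entirely; it is a genuine strengthening of the paper's argument.

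Your non-unital reduction, however, is only a sketch, and the phrase ``as in the treatment of Theorem~\ref{T7_new_T2}'' is doing more than that proof actually supplies. Concretely you still need: (i) when $\dim_{\mathbb{F}}eAe=\aleph_0$, that two extensions (via Lemma~\ref{L5_new!}) which differ by an inner automorphism of $A$ restrict to automorphisms of $eAe$ differing by an inner automorphism of $eAe$---this is the automorphism analogue of the Peirce-decomposition step carried out for derivations in the proof of Theorem~\ref{T3}, and goes through once one observes that an inner automorphism fixing $e$ is conjugation by an element commuting with $e$, hence on $eAe$ by its $eAe$-component; and (ii) in the residual case $A\cong M_\infty(\mathbb{F})$, a direct block-spreading construction, e.g.\ conjugation by $\mathrm{Id}+\sum_i \lambda_{k(i)}\,e_{2i-1,2i}$. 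Both are routine, but neither is literally ``as in Theorem~\ref{T7_new_T2}'', so you should spell them out.
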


\section{Tykhonoff topology}

Let $X$ be an arbitrary set. The set $\text{Map} (X,X)$ of mappings {$X \rightarrow X$}  is equipped with the Tykhonoff topology (see \cite{Willard}). For distinct elements $a_1,\ldots,a_n\in X$ and arbitrary elements $b_1,\ldots,b_n\in X,$ $n\geq 1,$ consider the subset
\begin{equation}\label{eq3}
 M(a_1,\ldots,a_n;b_1,\ldots,b_n) = \big\{ f:X\rightarrow X \ | \ f(a_i)=b_i, \ 1\leq i\leq n \big\}
\end{equation}
 of  $\text{Map} (X,X)$. The Tykhonoff topology on $\text{Map} (X,X)$ is generated by all open sets of this type (\ref{eq3}).

Thus, for a subset $S \subset \text{Map} (X,X)$ a mapping $f:X\rightarrow X$ lies in the completion $\overline{S}$ of $S$ if and only if for  any $n \geq 1$ and for any  elements $a_1,\ldots,a_n\in X$ there exists a mapping $$g:X\rightarrow X, \ \ g\in S, \ \text{ such that } \ f(a_i)=g(a_i), \ \ 1\leq i \leq n.$$

\begin{proof}[Proof of Theorem $\ref{T1_new}$] (1) It is  straightforward that the set of derivations \ $\text{Der} (A)$ is closed in $\text{Map} (X,X).$ It implies that  $$\overline{\text{Inder} (A)}\  \subseteq  \ \text{Der} (A).$$ To prove the assertion we need to show that for any derivation $d: A\rightarrow A$ and arbitrary elements $a_1,\ldots,a_n \in A$ there exists an element $b\in A$ such that $$[b  ,  a_i] \ = \ d(a_i), \quad 1\leq i \leq n.$$

Choose a subalgebra $B_1 \subset A$ such that $a_1,\ldots, a_n \in B_1$ and $B_1 \cong M_k(\mathbb{F}).$  Then choose a subalgebra $B_2 \subset A$ such that $$B_1+ d(B_1)\subseteq B_2 \quad \text{and} \quad B_2 \cong M_l(\mathbb{F}). $$  The vector space $B_2$ is a $B_1$--bimodule and  $d:B_1\rightarrow B_2 $ is a bimodule derivation. Since any bimodule derivation of finite--dimensional matrix algebras over a field is inner (see \cite{Pierce}) there exists an element $b\in B_2$ such that $d(a)=[b,a]$ for all elements $a\in B_1.$ This proves the part (1) of the Theorem.

\vspace{8pt}
(2) Let $A$ be a unital locally matrix algebra. The set $P(A)$ of unital injective endomorphisms is closed in the Tykhonoff topology. Hence $$\overline{\text{Inn}(A)}\subseteq P(A).$$

Now, let  $\varphi: A \rightarrow A $ be an injective endomorphism and $\varphi(1)=1.$ Let $a_1,\ldots, a_n \in A.$ As above choose a subalgebra $B_1 \subset A $ such that   $1, a_1,\ldots, a_n \in B_1$ and $B_1 \cong M_k(\mathbb{F}).$ Then choose another subalgebra $B_2\subset A$ such that   $$B_1+ \varphi(B_1)\subseteq B_2 \quad \text{and} \quad B_2 \cong M_l(\mathbb{F}).$$  By the Skolem--Noether Theorem (see \cite{Drozd_Kirichenko,SkolemNoether}) there exists an invertible element $$a\in B_2 \quad \text{such that} \quad \varphi(x)=a^{-1}x a \quad \text{for all elements} \quad x\in B_1.$$ This finishes the proof of the Theorem. \end{proof}

\section{Derivations of Tensor Products of Matrix Algebras}

Recall that $I$ is an infinite set and let $\mathbf{A}$ be a tensor product of the kind (\ref{otimes_eq}): $$\mathbf{A}= \otimes_{i\in I} \ A_i,$$ where all algebras $A_i$  are matrix algebras over $\mathbb{F},$ $\dim_{\mathbb{F}}A_i > 1.$ Clearly, $\mathbf{A}$ is a unital locally matrix algebra.

 \begin{lemma} \label{L2}
For any  $i\in I$ the subalgebra $$C=\otimes_{j \neq i} \ A_j$$ is the  centraliser of the subalgebra $A_i$ in $\mathbf{A}.$
\end{lemma}
\begin{proof} We have $$\mathbf{A}= A_i\otimes{}_{\mathbb{F}} \ C.$$ Clearly, $C$ lies in the centraliser  of $A_i$ in $\mathbf{A}.$ Now, suppose that $x\in \mathbf{A}$ and $[A_i,x]=\{0\}.$ Let  $$x= \sum_{k=1}^{n} \ a_k \otimes c_k, \quad  a_1, \ldots, a_n \in A_i , \quad c_1, \ldots , c_n \in C,$$ and the elements $c_1,$ $\ldots,$ $c_n$ are linearly independent. Then for an arbitrary element  $a\in A_i$ we have $$ \sum_{k=1}^{n} \ [ a,a_k] \otimes c_k=0,$$ which implies $$ [a,a_1]= \cdots =[a, a_n]=0.$$ Hence $a_1,$ $\ldots,$ $a_n\in \mathbb{F} \cdot 1_{A_i} $ and $x\in C.$  \end{proof}

 \begin{lemma} \label{L3}
Let $i\in I$ and let $d\in \emph{Der} (\mathbf{A}).$ If $d(A_i)=\{0\}$ then the subalgebra   $$C=\otimes_{j \neq i} \ A_j $$ is $d$--invariant.
 \end{lemma}
\begin{proof} We have $[A_j , C]=\{0 \}.$ Hence $$\big[A_i , d(C)\big]\subseteq d\big(\big[ A_i, C\big]\big)+ \big[d( A_i), C\big]=\{0 \}. $$
By Lemma \ref{L2}, it implies $d(C)\subseteq C.$ \end{proof}

\begin{proof}[Proof of Theorem $\ref{T2_new}$] (1) Let $I=\{i_1,i_2,\ldots \}.$  Let $d\in \text{Der} (\mathbf{A}).$ Since the algebra of inner derivations $\text{Inder}(\mathbf{A}) $  is dense in $\text{Der} (\mathbf{A}),$ by  Theorem \ref{T1_new} (1), it follows that there exists an element $a_1 \in \mathbf{A}$ such that $$\big(d-\text{ad}_{\mathbf{A}}(a_1)\big) (A_{i_1})=\{0\}.$$ There exists  a finite subset $S_1 \subset I$ such that $a_1 \in A_{S_1}.$ By Lemma \ref{L3}, the derivation $d- \text{ad}_{\mathbf{A}}(a_1) $ maps the subalgebra  $$\otimes_{j\neq i_1}  A_j$$ into itself. Arguing as above, we find a finite subset $S_2 \subset I \diagdown \{i_1\}$ and an element $a_2 \in A_{S_2}$ such that $$\big(d-\text{ad}_{\mathbf{A}}(a_1) -\text{ad}_{\mathbf{A}} (a_2) \big) (A_{i_2})=\{0\}$$ and so on. We get a sequence $S_1,$ $S_2,$ $\ldots$ of nonempty finite subsets of $I,$ $$S_n \subset I \ \diagdown \ \{i_1, \ldots, i_{n-1}\},  \quad  n\geq 2,$$ and a sequence of elements $a_n \in A_{S_n},$ $n\geq 1,$ such that $$d=\sum_{n=1}^{\infty} \ \text{ad}_{\mathbf{A}}(a_n).$$ Adding to the subsets $S_1,$ $S_2,$ $\ldots$ all their  nonempty subsets, we get a sparse system $\mathcal{P}$ and $d\in D_{\mathcal{P}}.$ This completes the proof of the part $(1)$ of the Theorem.

\vspace{8pt} (2) Let $I$ be an infinite, not necessarily countable set. Let $\mathcal{P}$ be a sparse system of finite nonempty subsets of $I.$ Choose a subset  $S= \{i_1,\ldots, i_r\}\in \mathcal{P}$ and element  $ a_k \in A_{i_k},$ $ 1\leq k \leq r.$ Let $$a_k =\gamma_k \cdot 1_{A_{i_k}}+ a_k^{0}, \quad \text{where} \quad \gamma_k \in \mathbb{F}, \quad  a_k^{0}\in A_{i_k}^0 .$$ Expanding brackets in the tensor $$  a_1 \otimes \cdots \otimes a_r = \big(\gamma_1 \cdot 1_{A_{i_1}} + a_1^{0}\big) \otimes \cdots \otimes \big(\gamma_r \cdot 1_{A_{i_r}} + a_r^{0}\big)$$ we get  $$ a_1 \otimes \cdots \otimes a_r \in \mathbb{F} \cdot 1 + \sum_{k=1}^r 1 \otimes \cdots \otimes A_{i_k}^0 \otimes \cdots \otimes 1 + \cdots + A_{i_1}^0 \otimes \cdots \otimes A_{i_r}^0.$$ Hence, the space $\text{ad}_{\mathbf{A}} (A_s)$ is spanned by $$\bigcup_{\emptyset \, \neq \, S' \, \subseteq \, S} \ \text{ad}_{\mathbf{A}}(E_{S'}).$$ This implies that an arbitrary element from $D_{\mathcal{P}}$ can be represented as a converging sum $$\sum_k \ \alpha_k \ \text{ad}_{\mathbf{A}} (e_k), \quad \text{where} \quad \alpha_k \in \mathbb{F} \quad \text{and} \quad \{e_k\}_k = \bigcup_{S\in \mathcal{P}} \ E_S.$$ Now we need to show that  $$\sum_k \ \alpha_k \ \text{ad}_{\mathbf{A}} (e_k)=0$$ implies $\alpha_k =0$ for all $k.$

Let $S\in \mathcal{P} ,$ $i\in S,$ $S^0 =S \ \diagdown \ \{i\}.$ An arbitrary element $e$ from $E_S$ can be represented (up to a permutation of tensors) as $$e=e' \otimes e'', \quad \text{where} \quad e' \in E_i, \quad e'' \in E_{S^0}.$$ For an element $a\in A_i$ we have  $[e,a]=[e',a]\otimes e''.$

Fix $i\in I.$ Let $S_1,$ $\ldots,$ $ S_t$ be all subsets from $\mathcal{P}$ that contain $i.$ Let $$S^0_j =S_j \ \diagdown \ \{i\}, \quad 1 \leq j \leq t.$$ If $$e_k = e' \otimes e'', \quad  \text{where} \quad  e' \in E_i \quad \text{and} \quad e'' \in \bigcup_{j=1}^t \ E_{S_j^0}, $$ or $e'' =1$ if all $ S_j=\{i\}, $ then we denote $\alpha_{e',e''}:= \alpha_k.$ For an arbitrary element $a\in A_i$ we have $$ \Big[ \ \sum_k \ \alpha_k e_k, a \ \Big]= \sum \ \alpha_{e',e''} \, [e',a]\otimes e'' =0,  $$ where the summation runs over all $$(e',e'')\in E_i \times \Big( \ \bigcup_{j=1}^t \ E_{S_j^0} \ \bigcup \ \big\{1\big\} \ \Big).$$ Hence $$ \Big[ \ \sum_{e'} \ \alpha_{e',e''}\, e', a \ \Big]=0 \quad  \text{for any} \quad e'' \in  \bigcup_{j=1}^t \ E_{S_j^0} \ \bigcup \ \big\{ 1 \big\}.$$

The element  $\sum_{e'} \alpha_{e',e''} e'$ lies in $A_i^0$ and at the same time it lies in the center of the algebra $A_i.$ Hence $$ \sum_{e'} \alpha_{e',e''} e'=0$$ which implies $\alpha_{e',e''}=0.$ This completes the proof of   Theorem $\ref{T2_new}$.  \end{proof}

In what follows we will need the following Lemma.

\begin{lemma} \label{L6}
Let $A$ be a countable--dimensional  locally matrix algebra. Let $0\neq e\in A$ be an idempotent. Then every derivation of the subalgebra $eAe$ extends to a derivation of $A.$
 \end{lemma}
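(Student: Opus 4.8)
The plan is to build the extension as an increasing union of derivations defined on a chain of matrix subalgebras, correcting each derivation on the relevant corner as the chain grows. First I would record two preliminaries. Since $A$ is a countable--dimensional locally matrix algebra, for any idempotent $f\in A$ the corner $fAf$ is again a locally matrix algebra: given finitely many elements of $fAf$, enclose them together with $f$ in a matrix subalgebra $B\cong M_m(\mathbb{F})$, and then $fBf$ is a matrix subalgebra of $fAf$ containing them. In particular $eAe$ and the corners $u_nAu_n$ introduced below are locally matrix. Secondly, since $e$ is the unit of $eAe$, every derivation of $eAe$ annihilates $e$, so $d(e)=0$.

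Next I would fix a chain of matrix subalgebras $e\in B_1\subset B_2\subset\cdots$ with $\bigcup_n B_n=A$, and put $u_n=1_{B_n}$, so that $e\leq u_n\leq u_{n+1}$ and $eB_ne\uparrow eAe$. I would construct derivations $D_n\colon B_n\to A$ satisfying $D_n(u_n)=0$, \ $D_n|_{eB_ne}=d|_{eB_ne}$, and $D_{n+1}|_{B_n}=D_n$. The union $D=\bigcup_n D_n$ is then a well--defined derivation of $A$ whose restriction to $eAe$ is $d$, which is the assertion. For the base step, since $eB_1e$ is a finite--dimensional matrix algebra and $eAe$ is a unital $eB_1e$--bimodule, the derivation $d|_{eB_1e}$ is inner, say $d(x)=[c,x]$ with $c\in eAe$; then $D_1=\text{ad}_A(c)|_{B_1}$ works, and $D_1(u_1)=[c,u_1]=0$ because $c\in eAe$ and $e\leq u_1$.

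For the inductive step I would exploit the condition $D_n(u_n)=0$, which forces $D_n(B_n)\subseteq u_nAu_n$; hence $D_n$ is inner \emph{inside} the unital locally matrix algebra $u_nAu_n$, say $D_n=\text{ad}(c_n)|_{B_n}$ with $c_n\in u_nAu_n$. The operator $\tilde D=\text{ad}_A(c_n)|_{B_{n+1}}$ then extends $D_n$ and still annihilates $u_{n+1}$, but it need not agree with $d$ on the larger corner $eB_{n+1}e$. I would measure the defect by $\eta=d|_{eB_{n+1}e}-\tilde D|_{eB_{n+1}e}$, a derivation of $eB_{n+1}e$ into $eAe$ that vanishes on $eB_ne$, and then repair $\tilde D$ by an inner derivation $\text{ad}_A(z)$ with $z\in Z_A(B_n)$ and $[z,\,\cdot\,]=\eta$ on $eB_{n+1}e$. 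Adding such a $z$ keeps the relations $D_{n+1}|_{B_n}=D_n$ and $D_{n+1}(u_{n+1})=0$ intact while restoring agreement with $d$ on the corner.

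The heart of the argument, and the step I expect to be the main obstacle, is producing this $z$: it must centralize \emph{all} of $B_n$, whereas $\eta$ only records information about the smaller corner $eB_ne$. Here I would invoke the double centralizer theorem: as $B_n\cong M_{d_n}(\mathbb{F})$ is central simple with unit $u_n$, the corner splits as $u_nAu_n=B_n\otimes_{\mathbb{F}} C$ with $C=Z_{u_nAu_n}(B_n)$, whence $eAe=(eB_ne)\otimes C$ and $eB_{n+1}e=(eB_ne)\otimes W$ for the finite--dimensional matrix algebra $W=Z_{eB_{n+1}e}(eB_ne)\subseteq C$. A derivation of $(eB_ne)\otimes W$ that vanishes on $eB_ne\otimes 1$ is forced, by the Leibniz rule together with the fact that its values centralize $eB_ne$, to have the form $\text{id}\otimes\delta$ for a derivation $\delta\colon W\to C$; since $W$ is a matrix algebra, $\delta$ is inner, $\delta=\text{ad}_C(z_0)$, and $z=z_0\in C\subseteq Z_A(B_n)$ does the job. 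Carrying out this bookkeeping carefully across the two idempotents $e$ and $u_n$ is the only delicate point; everything else reduces to the vanishing of the first Hochschild cohomology of finite--dimensional matrix algebras with coefficients in unital bimodules.
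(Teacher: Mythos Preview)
Your argument is correct and complete. The inductive bookkeeping with the two idempotents $e$ and $u_n$ goes through: the key points are that $[c_n,e]=0$ (from $D_n(e)=d(e)=0$), so $\tilde D$ genuinely maps $eB_{n+1}e$ into $eAe$; that the corner decompositions $u_nAu_n=B_n\otimes C$ and $eB_{n+1}e=(eB_ne)\otimes W$ hold by the double centralizer theorem with the units lining up; and that the embedding $W\hookrightarrow C$ via $ec\mapsto c$ is unital, so the inner element $z_0\in C$ really centralizes all of $B_n$ while implementing $\eta$ on $eB_{n+1}e$. Nothing is missing.

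Your route, however, is quite different from the paper's. The paper first treats the unital case by invoking K\"{o}the's theorem to write $A=\bigotimes_{i\ge 1}A_i$ with $e\in A_1$, and then applies the structural description of $\text{Der}(\mathbf{A})$ from Theorem~\ref{T2_new}(1): the given $d\in\text{Der}(eAe)$ is a Tykhonoff--convergent sum $\sum_{S\in\mathcal P}\text{ad}_{eAe}(a_S)$ over a sparse system, and since $(eAe)_S\subseteq A_S$, the same sum $\sum_{S\in\mathcal P}\text{ad}_A(a_S)$ converges in $\text{Der}(A)$ and restricts to $d$. The non-unital case is then reduced to the unital one via a chain of idempotents. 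So the paper leans on the machinery it has already built (sparse systems and the topological basis of $D_{\mathcal P}$), whereas your proof is self-contained: it needs only the vanishing of $H^1(M_n(\mathbb{F}),-)$ on unital bimodules and the double centralizer theorem, it treats the unital and non-unital cases uniformly, and it avoids K\"{o}the's theorem and Theorem~\ref{T2_new} entirely. The paper's proof is shorter in context; yours is more portable.
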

\begin{proof} Suppose at the first that the algebra $A$ is unital. By the K\"{o}the's Theorem \cite{Koethe},  we can assume that algebra $A$ of the kind (\ref{T_PR0}):  $$A=\otimes_{i=1}^{\infty} A_i,$$ where each factor $A_i$ is a matrix algebra over $\mathbb{F}.$ There exists  $n\geq 1$  such that $e\in A_1 \otimes\cdots \otimes A_n.$ Replacing the first $n$  factors $A_1,$ $\ldots,$ $A_n$ by one  factor $ A_1 \otimes\cdots \otimes A_n$ we can assume that $e\in A_1.$ Then $$ eAe=eA_1 e \otimes \Big(\otimes_{j=2}^{\infty}A_j\Big).$$  Let $d\in \text{Der}(eAe).$ By Theorem \ref{T2_new} (1),  there exists a sparse system $\mathcal{P}$ of nonempty subsets of  the set of positive integers  such that $$ d= \sum_{S\in \mathcal{P}} \ \text{ ad}_{eAe}(a_S), \quad a_S \in (eAe)_S. $$ We have $(eAe)_S\subseteq A_S.$ Since the system $\mathcal{P}$ is sparse it follows that the infinite sum  $$ \sum_{S\in \mathcal{P}} \ \text{ ad}_{A}(a_S)$$ converges in the Tykhonoff topology on $\text{Map}(A,A)$ to a derivation that extends $d.$

Suppose now that  $A$ is a countable--dimensional non unital locally matrix algebra. There exists a sequence of idempotents $e_1=e,$  $e_2,$ $\ldots$ such that
\begin{equation}\label{EQ_3stars}
 e_i A e_i \subset e_{i+1} A e_{i+1}, \quad i\geq 1, \quad \text{and} \quad \bigcup_{i\geq 1} \ e_i A e_i=A.
\end{equation}
Let $d\in \text{Der} (eAe).$ Remark, that for any idempotent $e'\in A$ the subalgebra $e'Ae'$ is a unital locally matrix subalgebra. So, by the unital case of this Lemma (see above), there exist derivations $d_i\in \text{Der} (e_iAe_i)$ such that $d_{i+1}$ extends $d_i,$ $i\geq 1,$ and $d_1 =d.$ The derivation $$\bigcup_{i=1}^{\infty} \ d_i  \in \text{Der}(A)$$ extends the derivation $d.$  \end{proof}

\section{The Lie Algebra  of  Outer Derivations is Not Locally Finite--Dimensional}

Let $\mathbb{N}$ be the set of positive integers and let $M_{\infty}(\mathbb{F})$ be the algebra of all infinite $\mathbb{N} \times \mathbb{N}$ finitary matrices over $\mathbb{F},$ that is matrices that contain finitely many nonzero entries.

\begin{lemma} \label{L4_new!}
Let $A$ be a countable--dimensional non unital locally matrix algebra such that for every idempotent $e\in A$ we have $\dim_{\mathbb{F}} eAe < \infty.$ Then $A\cong M_{\infty}(\mathbb{F}).$
 \end{lemma}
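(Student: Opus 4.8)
The plan is to realise $A$ as the increasing union of a chain of finite--dimensional corners $e_nAe_n\cong M_{m_n}(\mathbb{F})$ with $m_n\to\infty$, and then to identify this union with $M_{\infty}(\mathbb{F})$ by assembling a single global system of matrix units compatible with the chain. First I would record two preliminary facts. A finite--dimensional locally matrix algebra coincides with any matrix subalgebra containing a basis of it, hence is isomorphic to some $M_m(\mathbb{F})$ and in particular unital; since $A$ is non--unital, it is therefore infinite--dimensional. Moreover, as used in the proof of Lemma~\ref{L6}, every corner $eAe$ with $e=e^2\neq 0$ is a unital locally matrix algebra, so by hypothesis it is finite--dimensional and thus $eAe\cong M_{m}(\mathbb{F})$ for some $m=m(e)$.

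Next I would construct the chain. Fix a basis $a_1,a_2,\dots$ of $A$. Inductively, given an idempotent $e_{n-1}$, I use the locally matrix property to choose a subalgebra $B_n\cong M_{k_n}(\mathbb{F})$ containing both $e_{n-1}$ and $a_n$, and set $e_n=1_{B_n}$. Then $e_{n-1}\in B_n$ forces $e_{n-1}e_n=e_ne_{n-1}=e_{n-1}$, i.e.\ $e_{n-1}\le e_n$, while $a_n\in B_n=e_nB_ne_n\subseteq e_nAe_n$. Writing $R_n=e_nAe_n\cong M_{m_n}(\mathbb{F})$, the relation $e_{n-1}\le e_n$ yields $R_{n-1}=e_{n-1}R_ne_{n-1}$, a corner of $R_n$, and $A=\bigcup_n R_n$. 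As $A$ is infinite--dimensional, $\dim_{\mathbb{F}}R_n\to\infty$, so $m_n\to\infty$.

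Finally I would produce the matrix units. Choose matrix units $\{e_{ij}\}_{1\le i,j\le m_1}$ of $R_1$. Given matrix units $\{e_{ij}\}_{1\le i,j\le m_{n-1}}$ of $R_{n-1}$, I regard them inside $R_n\cong M_{m_n}(\mathbb{F})$; because $R_{n-1}=e_{n-1}R_ne_{n-1}$ is a corner, these are matrix units of $R_n$ with $\sum_{i\le m_{n-1}}e_{ii}=e_{n-1}$, an idempotent of rank $m_{n-1}$. I then extend them to a full system $\{e_{ij}\}_{1\le i,j\le m_n}$ of $R_n$: pick matrix units of the complementary corner $(e_n-e_{n-1})R_n(e_n-e_{n-1})\cong M_{m_n-m_{n-1}}(\mathbb{F})$ and link the two blocks via a generator of the one--dimensional space $p\,R_n\,q$ for orthogonal rank--one idempotents $p,q$. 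Passing to the limit yields matrix units $\{e_{ij}\}_{i,j\in\mathbb{N}}$ in $A$ with $\mathrm{span}\{e_{ij}\}=\bigcup_n R_n=A$, and sending each $e_{ij}$ to the corresponding finitary matrix unit gives the isomorphism $A\cong M_{\infty}(\mathbb{F})$.

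The hard part will be the last step: arranging the matrix units of successive corners to be \emph{compatible}, so that they glue into one global system. This rests on the observation that each inclusion $R_{n-1}\hookrightarrow R_n$ is a corner inclusion, together with the standard but slightly delicate fact that a partial system of matrix units in $M_s(\mathbb{F})$ extends to a complete one. By contrast, the exhausting chain and the finite--dimensionality of the corners are routine consequences of the hypotheses.
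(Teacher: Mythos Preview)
Your proof is correct and follows essentially the same approach as the paper: build an exhausting chain of idempotents $e_1\le e_2\le\cdots$, show each corner $e_nAe_n$ is a full matrix algebra, and use that each inclusion $e_{n-1}Ae_{n-1}\hookrightarrow e_nAe_n$ is a corner embedding to identify the direct limit with $M_\infty(\mathbb{F})$. The only cosmetic difference is that the paper phrases the last step as choosing isomorphisms $\varphi_i:M_{n_i}(\mathbb{F})\to e_iAe_i$ making the transition maps the upper--left diagonal embeddings $a\mapsto\begin{pmatrix}a&0\\0&0\end{pmatrix}$, whereas you assemble a single compatible system of matrix units; these are two ways of saying the same thing.
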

\begin{proof} Since the algebra $A$ is countable--dimensional and non unital there exists a sequence of idempotents $e_1,$ $e_2,$ $\ldots$ such that  (\ref{EQ_3stars}) holds.

We claim that each subalgebra $e_i A e_i$ is isomorphic to a matrix algebra over $\mathbb{F}.$ Indeed, since $\dim_{\mathbb{F}} e_i A e_i < \infty$ there exists a subalgebra $A' \subset A$ such that $e_i A e_i \subseteq A'$ and $A'$ is isomorphic to a matrix algebra. Let $$\psi:A'\rightarrow M_t(\mathbb{F})$$ be an isomorphism. Let $n_i$ be the range of the matrix $\psi(e_i)$ in $M_t(\mathbb{F}).$ Then $$e_i A e_i \ \cong \ \psi (e_i)\, M_t(\mathbb{F}) \, \psi(e_i) \ \cong \ M_{n_i}(\mathbb{F}).$$ Let $$\text{id}_{i,i+1}: e_i A e_i \rightarrow e_{i+1} A e_{i+1}$$ be the embedding homomorphism, $i\geq 1.$ It is easy to see that there exists a sequence of isomorphisms $$\varphi_i :M_{n_i}(\mathbb{F}) \rightarrow e_i A e_i, \quad n_i \geq 1, \quad i \geq 1,$$ such that the embeddings $$ \varphi_{i+1}^{-1} \circ \text{id}_{i,i+1}\circ \varphi_i \quad \text{of} \quad M_{n_i}(\mathbb{F}) \quad \text{into} \quad M_{n_{i+1}}(\mathbb{F}) $$ is diagonal, that is
$$a \rightarrow \left(
\begin{array}{cc}
a & 0 \\
0 & 0 \\
\end{array}
\right), \quad a \in M_{n_i}(\mathbb{F}). $$

The algebra $A$ is isomorphic to the direct limit of matrix algebras $M_{n_i}(\mathbb{F})$ with diagonal embeddings, that is, to $M_{\infty}(\mathbb{F}).$  \end{proof}

\begin{proof}[Proof of Theorem  $\ref{T3}$] Suppose at the first that the algebra $A$ is unital. Then by the K\"{o}the's Theorem~\cite{Koethe}, the algebra $A$ of the kind (\ref{T_PR0}). We will assume that $$A=\otimes_{i=1}^{\infty} A_i, \quad A_i \cong M_{n_i} (\mathbb{F}), \quad i\geq 1, \quad n_i\geq 2.$$ The algebras $A_i$ are embedded in $A$  via $$u_i : a  \mapsto \underbrace{1 \otimes \cdots  \otimes 1  \otimes a}_i \otimes 1 \otimes  \cdots, \ \ \ \ a \in A_i.$$ Let $$e_{pq}(i):=u_i (e_{pq}), \quad 1 \leq p,q \leq n_i,$$ denote the image of the matrix unit $e_{pq}\in A_i \cong M_{n_i} (\mathbb{F}).$  Since the  images of $A_i$ and $A_j,$ $i\neq j,$ commute in $A,$ we have $$[\,e_{pq}(i),e_{rs}(j)\,]=0 \quad \text{ for } \quad i\neq j.$$

Consider  the following derivations of the algebra $A:$ $$ z=\sum_{i=1}^{\infty}\ \text{ad}\big(e_{12}(i) e_{11}(i+1)\big)\in \text{ad}(A_{[1,2]})+\text{ad}(A_{[2,3]})+\cdots$$ and $$ y_k =\sum_{j=1}^{\infty}\ \text{ad}\big(e_{12}(j)  \cdots e_{12}(j+k-1)\big)\in   \text{ad}(A_{[1,k]})+\text{ad}(A_{[2,k+1]})+\cdots ,$$ where  $k\geq 1$ and $[t,l]$ is the integer segment, $[t,l]=\{t, t+1,\ldots, l\},$ $1\leq t\leq l.$

We claim that $[z,y_k]=y_{k+1}$ for any $k\geq 1.$ Indeed, $$[z,y_k]=\sum_{i,j} \ \big[\,\text{ad}\big(e_{12}(i) e_{11}(i+1)\big),\text{ad}\big(e_{12}(j)\cdots e_{12}(j+k-1)\big)\, \big]= $$ $$ \sum_{i,j} \ \text{ad}\big(\,[\, e_{12}(i) e_{11}(i+1),e_{12}(j)\cdots e_{12}(j+k-1)\, ]\,\big).$$

If $\{i, i+1\} \cap \{j,  \ldots, j+k-1\}= \emptyset$ then each factor of $e_{12}(i) e_{11}(i+1)$  commutes with each factor of $e_{12}(j)\cdots e_{12}(j+k-1).$

If $i \in \{j, \ldots, j+k-1\}$ then $$e_{12}(i) e_{11}(i+1)\cdot e_{12}(j)\cdots e_{12}(j+k-1)=$$ $$e_{12}(j)\cdots e_{12}(j+k-1)\cdot e_{12}(i) e_{11}(i+1) =0 $$  since $ e_{12}(i)^2 =0.$

It remains to consider only one case: $j=i+1.$ We have  $$e_{12}(i) e_{11}(i+1)\cdot e_{12}(i+1)\cdots e_{12}(i+k)=e_{12}(i) e_{12}(i+1)\cdots e_{12}(i+k).$$ Multiplying these elements in the other order we get $$e_{12}(i+1) \cdots e_{12}(i+k)\cdot e_{12}(i) e_{11}(i+1)=0$$ since $e_{12}(i+1)  e_{11}(i+1)=0.$ Finally,
$$[z,y_k]=\sum_{i=1}^{\infty} \ \text{ad}\big(\,[\,e_{12}(i) e_{11}(i+1),e_{12}(i+1)\cdots e_{12}(i+k)\,]\,\big)= $$ $$\sum_{i=1}^{\infty} \ \text{ad}\big(e_{12}(i) e_{12}(i+1)\cdots e_{12}(i+k)\big)= y_{k+1}.$$

The Lie subalgebra of $\text{Der}(A)$ generated by  elements $z$ and $y_1$ contains all elements  $y_k,$ $k\geq 1.$

Let us show that  derivations $y_k,$ $k\geq 1,$ are linearly independent modulo $\text{Inder} (A).$ Recall that in each algebra $A_i$ we choose a subspace $A_i^0$ so that (\ref{eq5}) holds. Choose a subspace $A_i^{0}$ containing $e_{12}(i) $  and a basis $E_i$ in $A_i^{0}$ such that $e_{12}(i) \in E_i.$  Then $$e_{12}(i) \cdots e_{12}(i+k-1)\in E_{[i,\,i+k-1]}.$$ Suppose that $\alpha_1 y_1 + \cdots +\alpha_k y_k \in  \text{Inder} (A)$ and $\alpha_1, \ldots,\alpha_k \in \mathbb{F}.$  Then there exists $p\geq 1$ such that $$\alpha_1 y_1 + \cdots +\alpha_k y_k \in \text{ad}_A(A_{[1,\,p]}).$$ Without loss of generality, we will assume that $k\leq p.$

Consider a sparse system $\mathcal{P}$ that consists of intervals $[i,i+p-1],$ $i\geq 1,$ and all their nonempty subsets. Let $E$ denote the topological basis of the vector space $D_{\mathcal{P}}$ that corresponds to bases $E_i$ of  subspaces $A_i^{0};$ see Theorem~\ref{T2_new}~(2). We have
\begin{equation}\label{ad1}
 \alpha_1 y_1 + \cdots +\alpha_k y_k =\sum_{1\leq j \leq k, \ 1\leq i <\infty}   \alpha_j \text{ad}_A\big(e_{12}(i) \cdots  e_{12}(i+j-1)\big).
\end{equation}
The operators $\text{ad}_A(e_{12}(i) \cdots e_{12}(i+j-1))$  are distinct elements of the basis $E.$ If at least one coefficient $\alpha_j,$ $1\leq j \leq k,$ is not equal to $0,$ then the sum (\ref{ad1}) contains infinitely many basis elements from $E$ with nonzero coefficients. Hence, it can not be equal to a finite linear combination of basic elements from $E.$ Every element from $\text{ad}_A(A_{[1,p]})$ is a finite linear combination of basis elements. Therefore $\alpha_1=0,$ $\ldots,$ $\alpha_k=0.$ This proves the claim.

We showed that the Lie subalgebra of $\text{Der}(A)$ generated by  derivations $z$ and $y_1$  is infinite--dimensional module $\text{Inder}(A).$ This completes the proof of the Theorem in the case when the algebra $A$ is unital.

Now, let $A$ be a countable--dimensional  non unital locally matrix algebra. Suppose that there exists an idempotent $e\in A$ such that the unital  algebra $eAe$ is infinite--dimensional. We have shown that there exist derivations  $z$ and $y_1$ of the algebra $eAe$  such that the derivations $$y_k=\big[\underbrace{z,[z,\ldots,[z}_{k-1},y_1]\ldots]\,\big], \quad k\geq 1,$$ are linearly independent  module $\text{Inder}(eAe).$

By Lemma \ref{L6}, there exist derivations $\tilde{z}, \tilde{y}_1\in \text{Der}(A)$ that extend $z$ and $y_1$ respectively. Let us show that the derivations $$\tilde{y}_k=\big[\underbrace{\tilde{z},[\tilde{z},\ldots,[\tilde{z}}_{k-1},\tilde{y}_1]\ldots]\,\big], \quad  k\geq 1,$$ are linearly independent module $\text{Inder}(A).$

Suppose that $$d=\alpha_1 \tilde{y}_1 + \cdots +\alpha_n \tilde{y}_n \in  \text{Inder} (A), \quad \alpha_1,\ldots,\alpha_n\in \mathbb{F}.$$ We will show that in this case $\alpha_1 y_1 + \cdots +\alpha_n y_n \in  \text{Inder} (eAe).$ The derivation  $\tilde{y}_i$ extends the derivation $y_i.$ Hence, the subalgebra $eAe$ is invariant with respect to $d.$ Since $d\in \text{Inder} (A) $ then  there exists an element $u\in A$ such that $d(x)=[u,x] $ for an arbitrary element $x\in A.$

Consider the Peirce decomposition $$u= eue+(1-e)ue + eu(1-e)+(1-e)u(1-e),$$ where $1$ is a formal unit. For an arbitrary element $x\in eAe$ we have $$[u,x]= [eue,x]+(1-e)uex-xeu(1-e).$$ The inclusion $[u,x]\in eAe$ implies $[u,x]=[eue,x].$

We showed that the restriction of the derivation $d$ to $eAe$ is an inner derivation. Hence $$\alpha_1 y_1 + \cdots +\alpha_n y_n \in  \text{Inder} (eAe),$$ which implies $\alpha_1=\cdots=\alpha_n =0.$

By Lemma \ref{L4_new!}, if for an arbitrary idempotent $e\in A$ the subalgebra $eAe$ is finite--dimensional, then $A\cong M_{\infty}(\mathbb{F}).$ Thus, it remains to verify that the Lie algebra of outer derivations $\text{Outder}(M_{\infty}(\mathbb{F}))$ is not locally finite--dimensional.

Infinite matrices $$ z= \sum_{i=1}^{\infty} \ e_{2i, \, 2i+2} \quad \text{and} \quad y_k =\sum_{i=1}^{\infty} \ e_{2i, \, 2i+2k-1}, \quad k\geq 1, $$ are not finitary, but $$ [\,z,M_{\infty}(\mathbb{F})\,]\subseteq M_{\infty}(\mathbb{F}) \quad \text{and} \quad [\,y_k ,M_{\infty}(\mathbb{F})\,]\subseteq M_{\infty}(\mathbb{F}), \quad k\geq 1.$$ We have $[z, y_k]=y_{k+1},$ $k\geq 1.$ The subalgebra generated by derivations $\text{ad}(z), \text{ad}(y_1) \in \text{Der}(M_{\infty}(\mathbb{F}))$ contains all derivations $\text{ad}(y_k),$ $k\geq 1.$ It is easy to see that the derivations $\text{ad}(y_k),$ $k\geq 1,$ are linearly independent modulo $\text{Inder}(M_{\infty}(\mathbb{F})).$ It completes the proof of  Theorem $\ref{T3}$. \end{proof}

\section{Automorphisms and Unital Injective Endomorphisms}

\begin{proof}[Proof of Theorem $\ref{T4_new}$] Let $\varphi: A \rightarrow A$ be an injective endomorphism of the countable--dimensional unital locally matrix algebra algebra  (\ref{T_PR0}), $\varphi(1)=1.$ There exists a finite subset $S_1 \subset \mathbb{N}$ such that $\varphi  (A_1)  \subseteq  A_{S_1}.$ Applying the Skolem--Noether Theorem (see \cite{Drozd_Kirichenko,SkolemNoether}), as we did in the proof of Theorem~\ref{T1_new} we find an invertible  element $a_1 \in  A_{S_1}$ such that  $$ \varphi(x)  =  a^{-1}_1  x  a_1 \quad \text{for all elements} \quad  x  \in   A_1.$$ Let $\hat{a_1}$ be the automorphism of conjugation by the element $a_1.$ So, $\hat{a_1}  \in   \text{H}_1. $ Let $\varphi_1\in \mathcal{X}_1$ be a representative of the coset $\hat{a_1}\text{H}_2.$  The embedding $\psi_1   =  \varphi_1^{-1}  \varphi $  fixes all elements in the subalgebra $A_1.$

For an arbitrary element  $ a  \in  \otimes_{j\geq 2}  A_j $ we have $$\{0\}  =  \psi_1  (  [  A_1 ,  a   ]  )  =  [  \psi_1(A_1)  ,  \psi_1(a)  ]  =  [   A_1  ,  \psi_1(a)  ].$$ Hence, the element  $\psi_1(a)$ lies in the centralizer of  $A_1.$ By Lemma~\ref{L2}, $$ \psi_1(a)  \in  \otimes_{j \geq 2} \ A_j. $$ We showed that $\psi_1$ is an embedding of the algebra $\otimes_{j\geq 2}  \ A_j $ into itself.

Arguing as above, we find an automorphism   $ \varphi_2  \in  \mathcal{X}_2 $ such that $ \varphi_2^{-1} \ \psi_1$ fixes all elements in the subalgebra $A_2,$ and so on. As a result, we represent $\varphi$ as an infinite product  \begin{equation*}\label{bigotimes_2}\varphi  =  \varphi_1 \, \varphi_2 \, \cdots,  \quad  \varphi_i  \in  \mathcal{X}_i, \quad i  \geq  1. \end{equation*}

Now suppose that $$ \varphi_1\, \varphi_2 \, \cdots =  \varphi\,'_1 \, \varphi\,'_2 \, \cdots  , \quad \text{where} \quad \varphi\,'_i \in \mathcal{X}_i, \quad i \geq 1.$$ Applying both sides to elements from $A_1,$ we see that  $$\varphi_1  \big| \, \raisebox{-5pt}{$A_1$} = \varphi\,'_1  \big| \, \raisebox{-5pt}{$A_1$} .$$ Let $\varphi_1,$ $\varphi\,'_1$ be conjugations by invertible elements $a,$ $b$ respectively. Then the element $a^{-1}b$ lies in the centralizer of  $A_1,$ hence in $\otimes_{j> 1} \ A_j.$ So,  $\varphi_1^{-1}  \varphi\,'_1  \in  \text{H}_2$ and $\varphi_1  =  \varphi\,'_1.$ This implies  $$ \varphi_2 \, \varphi_3 \, \cdots  =  \varphi\,'_2 \, \varphi\,'_3 \, \cdots  .$$ Arguing as above, we see that $ \varphi_2 =  \varphi\,'_2,$ $ \varphi_3 =  \varphi\,'_3$ and so on. \end{proof}

\begin{proof}[Proof of Theorem $\ref{T5_new}$] Suppose that the sequence of automorphisms (\ref{EQ_integrable}) is integrable. Then for an arbitrary positive integer $p\geq 1$ the subspace spanned by $$\varphi_i^{-1} \cdots \varphi_1^{-1}(A_p),  \quad i \geq \ 1,$$ is finite--dimensional. Hence, there exists positive integer $q \geq 1$ such that $$ \varphi_i^{-1} \cdots \varphi_1^{-1}(  A_p  ) \ \subseteq \ A_{[1,\, q]} \quad \text{for any} \quad i  \geq  1.$$ This inclusion is equivalent to  $$ A_p \ \subseteq \ \varphi_1 \cdots \varphi_i \big(  A_{[1,\,q]}  \big), \quad i  \geq  1.$$ For $i=q$ we have $$ \varphi_1 \cdots \varphi_q (  A_{[1,\,q]} )  =  \varphi (  A_{[1,\,q]} ),$$ and therefore $$A_p \ \subseteq \ \varphi (  A_{[1,\,q]} ).$$ We showed that the injective endomorphism  $\varphi$ is surjective, hence an automorphism.

Now suppose that the injective endomorphism $\varphi$ is surjective. Then for an arbitrary $p\geq 1$ there exists $q \geq 1$ such that $$A_{[1,\, p]} \ \subseteq \ \varphi (  A_{[1,\,q]} )  =  \varphi_1 \cdots \varphi_i (  A_{[1,\,q]} ) \quad \text{for} \quad i  \geq  q.$$ Hence $$ \varphi_i^{-1} \cdots \varphi_1^{-1}(  A_{[1,\, p]} ) \  \subseteq \ A_{[1,\, q]} \quad \text{for} \quad i  \geq  q.$$ It implies that the subspace spanned by $$ \varphi_i^{-1} \cdots \varphi_1^{-1}(  A_{[1,\, p]}  ), \quad i  \geq  1, $$ is finite--dimensional, hence the sequence (\ref{EQ_integrable}) is integrable.  \end{proof}

\begin{proof}[Proof of Example $\ref{example1}$] For an  arbitrary subalgebra $ A_{i_1} \otimes  \cdots  \otimes A_{i_r}$ of the algebra $A$  and an arbitrary positive integer $j \geq 1$ we have $$a_j \cdots a_1(  A_{i_1}  \otimes  \cdots  \otimes  A_{i_r} )  a_1^{-1} \cdots a_j^{-1} = A_{i_1}  \otimes  \cdots   \otimes  A_{i_r}. $$ In particular, the subspace spanned by   $$\hat{a_j}^{-1} \cdots\hat{a_1}^{-1}(  A_{i_1}  \otimes  \cdots   \otimes  A_{i_r} )  , \quad  j  \geq  1, $$ is finite--dimensional and the sequence  $\hat{a_i}^{-1},$ $i \geq 1,$ is integrable. \end{proof}

\begin{proof}[Proof of Example $\ref{example2}$]  Recall that   $ a_i   =  e_{11}(i)  e_{12}(i+1)$ and the automorphism $\phi_i$ is a conjugation by  $(1+a_i)^{-1},$ $i\geq 1.$ Let $a_0 =e_{12}(1).$ Obviously, $(1+a_i)^{-1}=1-a_i$ for $i\geq 0.$ We claim that the sequence (\ref{EQ_4-0}) is not integrable. We will use induction on $i$ to prove that
\begin{equation}\label{stres}
(1  +  a_i)  \cdots  (1  +  a_1)  e_{12}(1)  (1  +  a_1)^{-1}  \cdots  (1  + a_i)^{-1}  =
\end{equation}
$$ e_{12}(1)  + e_{12}(1)  e_{12}(2)  +  \cdots  +  e_{12}(1)  e_{12}(2) \cdots  e_{12}(i+1).$$
For $i=0$ the assertion is obvious. Consider the element $$ \big(1  +  a_{i+1}\big) \ \Big( \ \sum_{k=1}^{i+1} \ e_{12}(1) \cdots  e_{12}(k) \ \Big) \ \big(1  -  a_{i+1}\big).$$ For an arbitrary  $k,$ $1\leq k \leq i+1,$ we have $$a_{i+1}  e_{12}(1)  \cdots  e_{12}(k)  a_{i+1}  = $$ $$ e_{11}(i+1)  e_{12}(i+2)  e_{12}(1)  \cdots  e_{12}(k)  e_{11}(i+1)  e_{12}(i+2)  =  0,$$ since $$ e_{12}(i+2)^2  =  0.$$ Hence $$ (1  +  a_{i+1}) \ \Big(  \sum_{k=1}^{i+1} \ e_{12}(1)  \cdots  e_{12}(k) \Big) \ (1  -  a_{i+1})  = $$ $$ \sum_{k=1}^{i+1} \ e_{12}(1)  \cdots  e_{12}(k)  +  \Big[ \ a_{i+1}  , \, \sum_{k=1}^{i+1} \ e_{12}(1)  \cdots  e_{12}(k)  \ \Big] .$$
Since elements from different tensor factors commute, we get for  $1\leq k \leq i$ $$ \big[ \ e_{11}(i+1)   e_{12}(i+2)  ,    e_{12}(1)  \cdots  e_{12}(k) \ \big]  =   0. $$ For $k=i+1$ $$ \big[ \ e_{11}(i+1)   e_{12}(i+2)  ,    e_{12}(1)  \cdots  e_{12}(i)  e_{12}(i+1) \ \big]  = $$ $$  e_{12}(1)  \cdots  e_{12}(i) \ \big[ \ e_{11}(i+1)  ,   e_{12}(i+1) \ \big]  e_{12}(i+2)  =  $$ $$  e_{12}(1)  \cdots  e_{12}(i+2).$$ So, (\ref{stres}) holds. Since the elements $ e_{12}(1)  \cdots  e_{12}(i),$ $i  \geq  1, $ are linearly independent in the algebra $A,$ we conclude that the subspace spanned by the elements  $$\phi_i \cdots\phi_1( e_{12}(1) )  = $$ $$ (1   +  a_i)  \cdots  (1  +  a_1)  e_{12}(1)  (1  +  a_1)^{-1}  \cdots  (1  +  a_i)^{-1}, \quad i  \geq  1,$$  is infinite--dimensional. Hence, the sequence (\ref{EQ_4-0}) is not integrable.

By Theorem \ref{T5_new}, the injective endomorphism  $\phi = \phi_1 \phi_2 \cdots $ is not surjective. Hence, the subalgebra $B=\phi(A)$ is isomorphic to  $A$ and $B \subsetneqq A.$ This is another proof of Theorem 10 from \cite{Kurosh}. \end{proof}

In the next chapter we will use the following Lemma.

\begin{lemma} \label{L5_new!} Let $A$ be a countable--dimensional locally matrix algebra. Let $e\in A$ be an idempotent. Then every automorphism of $eAe$ extends to an automorphism of $A.$
 \end{lemma}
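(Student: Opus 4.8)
The plan is to imitate the proof of Lemma~\ref{L6}, but to replace the additive bookkeeping used there for derivations by the multiplicative product decomposition of automorphisms supplied by Theorems~\ref{T4_new} and~\ref{T5_new}.

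First I would dispose of the non-unital case exactly as in Lemmas~\ref{L6} and~\ref{L4_new!}. Choose idempotents $e=e_1,e_2,\ldots$ with $e_iAe_i\subset e_{i+1}Ae_{i+1}$ and $\bigcup_i e_iAe_i=A$. Since every corner $e_iAe_i$ is a unital locally matrix algebra and $e_i\in e_{i+1}Ae_{i+1}$ with $e_i(e_{i+1}Ae_{i+1})e_i=e_iAe_i$, it suffices to extend an automorphism of the corner $e_iAe_i$ to an automorphism of the larger unital corner $e_{i+1}Ae_{i+1}$; the union of the resulting compatible automorphisms is then an automorphism of $A$ extending $\varphi$. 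Thus I may assume $A$ is unital.

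For the unital case, by K\"othe's Theorem write $A=\otimes_{i=1}^{\infty}A_i$ and, regrouping finitely many initial factors, arrange $e\in A_1$. Then $eAe=(eA_1e)\otimes A_2\otimes\cdots$ is again a countable tensor product of matrix algebras with unit $e$, with first factor $B_1=eA_1e$ and $B_j=A_j$ for $j\ge2$. Given $\varphi\in\operatorname{Aut}(eAe)$, Theorem~\ref{T4_new} yields $\varphi=\varphi_1\varphi_2\cdots$, where $\varphi_n$ is conjugation inside $eAe$ by an invertible element $u_n$ of $\otimes_{j\ge n}B_j$. The crucial observation is that each $\varphi_n$ lifts to an inner automorphism $\hat u_n\in H_n$ of $A$: for $n\ge2$ one has $u_n=e\,\tilde u_n$ with $\tilde u_n\in\otimes_{j\ge n}A_j$ invertible in $A$, and conjugation by $\tilde u_n$ restricts to $\varphi_n$ on $eAe$; for $n=1$ one takes $\tilde u_1=u_1+(1-e)$, which is invertible in $A$ and induces $u_1$-conjugation on $eAe$. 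Because $\hat u_n\in H_n$ fixes $A_1\otimes\cdots\otimes A_{n-1}$ pointwise, the product $\Phi=\hat u_1\hat u_2\cdots$ converges in the Tykhonoff topology to an element of $P(A)$, and since each $\hat u_n$ preserves $eAe$ and restricts there to $\varphi_n$, one obtains $\Phi|_{eAe}=\varphi$.

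The remaining and principal difficulty is to show that $\Phi$ is surjective. By Theorem~\ref{T5_new} this amounts to the integrability of $\{\hat u_n^{-1}\}$, i.e. to the finite-dimensionality of $\operatorname{span}\{g_n^{-1}ag_n:n\ge1\}$ for every $a\in A$, where $g_n=\tilde u_1\cdots\tilde u_n$. A short computation shows every $\tilde u_n$ commutes with $e$, whence each $g_n$ commutes with $e$ and conjugation by $g_n$ respects the Peirce decomposition $A=eAe\oplus eA(1-e)\oplus(1-e)Ae\oplus(1-e)A(1-e)$; it therefore suffices to treat each Peirce component separately. On $eAe$ the required finiteness is precisely the integrability of $\{\varphi_n^{-1}\}$ granted by Theorem~\ref{T5_new} for the automorphism $\varphi$. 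For the other components I would first absorb the single factor $\tilde u_1$ (reducing to conjugation by $g_n'=\tilde u_2\cdots\tilde u_n$, which lies in $\otimes_{j\ge2}A_j$ and hence commutes with $A_1$), and then fix matrix units $e_{1j},e_{j1}\in A_1$ linking the ranges of $e$ and $1-e$. Sandwiching a Peirce component of $a$ between suitable such matrix units lands it in $eAe$, where its orbit under conjugation by $g_n'$ is finite-dimensional; since finitely many of these products determine the component injectively, its own orbit is finite-dimensional as well. Hence integrability holds for every $a\in A$, and by Theorem~\ref{T5_new} the injective endomorphism $\Phi$ is an automorphism extending $\varphi$. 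The main obstacle is exactly this transfer of integrability from the corner $eAe$ to all of $A$.
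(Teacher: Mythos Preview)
Your argument is correct, but it is considerably more elaborate than the paper's. The paper does not invoke Theorems~\ref{T4_new} or~\ref{T5_new} at all. In the unital case it proceeds by a single Skolem--Noether step: choose $A_1\subseteq A_2$ with $e\in A_1$ and $\varphi(eA_1e)\subseteq eA_2e$, conjugate by an element of $eA_2e$ so that the modified automorphism $\varphi_1$ fixes $eA_1e$ pointwise, and then observe that $\varphi_1$ must preserve the centralizer of $eA_1e$ in $eAe$, which is $e\otimes C$ with $C=\otimes_{j\ge2}A_j$. Thus $\varphi_1$ is of the form $\mathrm{id}_{eA_1e}\otimes\theta$ for some $\theta\in\mathrm{Aut}(C)$, and $\mathrm{id}_{A_1}\otimes\theta$ is the desired extension to $A=A_1\otimes C$. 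No infinite product, no integrability argument. Your route trades this one-step centralizer trick for the structure theory of Theorems~\ref{T4_new}--\ref{T5_new}; the price is the integrability transfer via matrix units of $A_1$, which you handle correctly (and which, incidentally, works for an arbitrary $a\in A$ directly, so the Peirce splitting is not strictly needed). The paper's proof is shorter and uses only Skolem--Noether; yours makes the extension explicit as a convergent product and illustrates how Theorems~\ref{T4_new}--\ref{T5_new} feed back into the theory. One small notational slip: with the paper's convention that conjugation by $u$ means $x\mapsto u^{-1}xu$, the relevant products are $\tilde u_n\cdots\tilde u_1$ rather than $\tilde u_1\cdots\tilde u_n$, but this does not affect your argument.
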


\begin{proof} At first, let us assume that the algebra $A $ is unital. Let $\varphi$ be an automorphism of the subalgebra $eAe.$ If the automorphism $\varphi$ is inner then there exists an invertible element $x_e$ in the  subalgebra $eAe$ such that $$\varphi(a) = x_e^{-1}a x_e \quad \text{for all elements} \quad a\in eAe.$$  In this case, the element $x=x_e+ (1-e)$ is invertible in $A.$ The automorphism of conjugation $a\mapsto x^{-1}a x, $ $a\in A,$ extends the automorphism~$\varphi.$

Let $\varphi$ not be an  inner  automorphism. Then choose  subalgebras $A_1 \subseteq A_2 \subset A$ such that $1,$ $e\in A_1$ and $A_1\cong M_m(\mathbb{F})$ for some $m\geq 1,$ and $$\varphi(eA_1 e)\subseteq e A_2 e \quad \text{and} \quad  A_2 \cong M_n(\mathbb{F}) \quad \text{for some} \quad n\geq 1.$$ Let $\varphi':=\varphi|_{e A_1 e}$ be the restriction of $\varphi$ to the subalgebra $e A_1 e,$ so that $$ \varphi': eA_1 e \rightarrow \varphi(eA_1 e) , \quad \text{and} \quad \varphi' : e\mapsto e.$$ By the Skolem--Noether Theorem (see \cite{Drozd_Kirichenko,SkolemNoether}) there exists an invertible element $x_e \in eA_2 e$ such that $$\varphi'(a)=x_e^{-1}a x_e \quad \text{for all elements} \quad a\in eA_1 e.$$
Now, let us consider the  automorphism $$\psi': eAe \rightarrow eAe, \quad a\mapsto x_e^{-1}a x_e.$$ As we have shown above, the inner automorphism $\psi'$ of the subalgebra $eAe$ extends to some automorphism $\psi$ of the algebra $A.$ So, it is sufficient to show that the automorphism $\psi'^{-1}\circ \varphi\in \text{Aut}(eAe)$ extends to some automorphism $\tilde{\varphi}$ of $A.$ Then the   automorphism $\varphi$ extends to the automorphism $\psi \circ \tilde{\varphi}$ of $A.$

Let $\varphi_1:=\psi'^{-1}\circ \varphi.$ The composition $\varphi_1$ leaves every element from $eA_1 e$ fixed. Let $C$ be the centralizer of the subalgebra $A_1$ in $A.$ Then $$A=A_1\otimes_{\mathbb{F}} C \quad \text{and} \quad eAe = eA_1 e \otimes_{\mathbb{F}} C.$$ Since the subalgebra $e\otimes_{\mathbb{F}} C$ is the centralizer of $eA_1e$ in $eAe$ it follows that $e\otimes_{\mathbb{F}} C$ is invariant with respect to $\varphi_1.$ Hence, there exists an automorphism $\theta\in \text{Aut}(C)$ such that $$\varphi_1(a\otimes c)=a \otimes \theta(c) \quad \text{for arbitrary elements} \quad a\in eA e, \quad c \in C.$$ Now, the automorphism $$\tilde{\varphi}: A\rightarrow A,  \quad \tilde{\varphi}(a\otimes c) = a\otimes \theta(c), \quad a\in A_1, \quad c\in C,$$ extends $\varphi_1.$

We have proved the Lemma in the case when the algebra $A$ is unital.

Now suppose that the algebra $A$ is not unital. Then there exists a sequence of idempotents  $e_i \in A,$ $i\geq 1,$ such that $$e_1=e, \quad e_1 A e_1 \subset e_2 A e_2 \subset \cdots \quad \text{and} \quad \bigcup_{i\geq 1} \ e_i A e_i = A. $$ By what we proved above, there exists a sequence of automorphisms $$\varphi_i \in \text{Aut}(e_i A e_i), \quad \varphi_1 = \varphi \quad \text{and} \quad \varphi_{i+1} \big| \, \raisebox{-5pt}{$e_i A e_i$} = \varphi_i. $$ The union $$\tilde{\varphi} = \bigcup_{i\geq 1} \ \varphi_i$$ is an automorphism of $A$ that extends $\varphi.$  \end{proof}

\section{Dimensions of Lie Algebras of Derivations and Orders of Groups of Automorphisms}

In the proofs of Theorems \ref{T6_new_P1}, \ref{T7_new_T2} we will use the following nontrivial Theorem from Linear Algebra, that is due to P.~Erd\"{o}s and I.~Kaplanskiy; see \cite{Jacobson}\footnote{The author is grateful to V.~V.~Sergeichuk for this reference}.

Let $V$ be a vector space over a field $\mathbb{F}$ of infinite dimension $d.$ Let $V^{*}$ be the dual space, that is  the space of all functionals $V\rightarrow \mathbb{F}.$

\begin{theorem}[P.~Erd\"{o}s, I.~Kaplanskiy] \label{Erdos_Kaplanskiy} $$\dim_{\mathbb{F}}  V^{*} = |\,\mathbb{F}\,|^{d}.$$
\end{theorem}

\begin{proof}[Proof of Theorem $\ref{T6_new_P1}$] Consider the vector space $\text{Lin} (\mathbf{A})$ of all linear transformations $\mathbf{A}\rightarrow \mathbf{A}.$ Obviously, $$ \dim_{\mathbb{F}} \text{Der}(\mathbf{A}) \leq \dim_{\mathbb{F}} \text{Lin}(\mathbf{A}) \leq | \text{Lin}(\mathbf{A})|. $$ The dimension of the algebra $\mathbf{A}$ is equal to $|I|.$ The cardinality of the set $\text{Lin} (\mathbf{A})$ does not exceed the  cardinality of all $(I\times I)$-matrices over the field $\mathbb{F},$ the latter being  equal to  $$ |\text{Map}(I\times I,\mathbb{F})| = |\mathbb{F}|^{|I\times I|}=|\mathbb{F}|^{|I|},$$ since $|I|^2 = |I|;$ see \cite{Rubin}. We proved that  $$ \dim_{\mathbb{F}} \text{Der}(\mathbf{A})\leq  |\mathbb{F}|^{|I|}.$$ For an arbitrary index  $i\in I$ choose an element $0 \neq a_i \in A_i^{0}.$ Let $\mathcal{P}$ be the system of all one--element subsets of $I.$ Clearly, the system $\mathcal{P}$ is sparse.

For an arbitrary mapping $f:I\rightarrow \mathbb{F}$ consider the derivation $$ d_f=\sum_{i\in I} \ f(i) \text{ ad}_{\mathbf{A}}(a_i) \in D_{\mathcal{P}}. $$ By Theorem \ref{T2_new} (2), the mapping $f\rightarrow d_f$ is an embedding of the vector space  $\text{Map} (I,\mathbb{F})$ into the vector space $\text{Der}(\mathbf{A}).$ By the Erd\"{o}s--Kaplanskiy Theorem (see Theorem \ref{Erdos_Kaplanskiy}) we have  $$ \dim_\mathbb{F} \text{Map} (I,\mathbb{F}) =|\mathbb{F}|^{|I|}.$$ Hence  $$|\mathbb{F}|^{|I|}\leq \dim_{\mathbb{F}} \text{Der} (\mathbf{A}),  \quad \text{and finally} \quad  \dim_{\mathbb{F}} \text{Der} (\mathbf{A})=|\mathbb{F}|^{|I|} .$$ The dimension of the Lie algebra  $\text{Inder}(\mathbf{A})$ is equal to  $|I|,$  $|I|<|\mathbb{F}|^{|I|}.$ This implies that the equality (\ref{eq1}) holds. \end{proof}

\begin{proof}[Proof of Theorem  $\ref{T7_new_T2}$] Let $A$ be a countable--dimensional locally matrix algebra over a field $\mathbb{F}.$ Assume at first, that the algebra $A$ is unital.  Then by the K\"{o}the's Theorem \cite{Koethe}, the algebra $A$ is isomorphic to a countable tensor product  of finite--dimensional  matrix algebras. Now, the Theorem immediately follows from  Theorem \ref{T6_new_P1}.

Suppose now that the algebra $A$ is not  unital. As above,  $$\dim_\mathbb{F}\text{Der}(A)\leq \dim_\mathbb{F}\text{Lin}(A)\leq |\text{Lin}(A)|= |\mathbb{F}|^{\aleph_0}.$$

Let $e$ be an idempotent of the algebra $A$ such that the subalgebra $eAe$ is infinite--dimensional. By Lemma \ref{L6}, every derivation of the subalgebra $eAe$ extends to a derivation of the algebra  $A.$  The algebra $eAe$ is countable--dimensional and unital. From what we proved above, it follows that  $$|\mathbb{F}|^{\aleph_0}=\dim_\mathbb{F}\text{Der}(eAe)\leq \dim_\mathbb{F}\text{Der}(A).$$ We proved that in this case  $$\dim_\mathbb{F}\text{Der}(A)= |\mathbb{F}|^{\aleph_0}.$$

Now, it remains only to consider the case when $\dim_\mathbb{F} eAe < \infty$ for all idempotents $e\in A.$ By Lemma \ref{L4_new!}, in this case $A\cong M_{\infty}(\mathbb{F}).$ For an arbitrary mapping $f:\mathbb{N}\rightarrow \mathbb{F}$ consider the infinite diagonal matrix $$d_f = \text{diag}(0, f(1), f(2),\ldots).$$ The matrix $d_f$ is not necessarily finitary, but $$[d_f, M_{\infty}(\mathbb{F})]\subseteq M_{\infty}(\mathbb{F}).$$ Hence, $$\text{ad}_{M_{\infty}(\mathbb{F})}(d_f):x \mapsto [d_f,x],$$ is a derivation of the algebra $M_{\infty}(\mathbb{F}).$ The mapping $$f\rightarrow \text{ad}_{M_{\infty}(\mathbb{F})}(d_f)$$ is an embedding of vector spaces $$\text{Map}(\mathbb{N},\mathbb{F}) \rightarrow \text{Der}(M_{\infty}(\mathbb{F})).$$  By the Erd\"{o}s--Kaplanskiy Theorem (see Theorem \ref{Erdos_Kaplanskiy}), $$\dim_{\mathbb{F}}\text{Map}(\mathbb{N},\mathbb{F})= |\mathbb{F}|^{\aleph_0}.$$ Hence $$|\mathbb{F}|^{\aleph_0} \leq \dim_{\mathbb{F}} \text{Der}(M_{\infty}(\mathbb{F})). $$ We proved that $$\dim_{\mathbb{F}} \text{Der}(M_{\infty}(\mathbb{F}))= |\mathbb{F}|^{\aleph_0}. $$ Since the Lie algebra $\text{Inder} (A)$ is countable--dimensional and $\aleph_0 < |\mathbb{F}|^{\aleph_0}, $ it follows that $$\dim_{\mathbb{F}} \text{Outder}(M_{\infty}(\mathbb{F}))= |\mathbb{F}|^{\aleph_0}. $$  \end{proof}

\begin{proof}[Proof of Theorem  $\ref{T8_new_P1}$] As above, we start with the case when the algebra $A$ is unital. So,  $$A\cong\otimes_{i=1}^{\infty} \ A_i, \quad A_i \cong M_{n_i}(\mathbb{F}), \quad n_i \geq 2, \quad i\geq 1. $$ Let $$PGL(n_i,\mathbb{F})=GL(n_i,\mathbb{F}) \, \diagup \, \raisebox{-2pt}{$\mathbb{F}^{*}$} $$ denote the projective linear group. Consider the set $\mathbf{F}$ of mappings $$f: \mathbb{N} \rightarrow \bigcup_{i=1}^{\infty} PGL(n_i,\mathbb{F}) $$ such that $f(i)\in PGL(n_i,\mathbb{F})$ for all $i \in \mathbb{N}.$ It is easy to see that  $ | \mathbf{F}  |  =  |  \mathbb{F}  |\,^{\aleph_0}. $ For an invertible element $a\in A$ let $\hat{a}$ denote the automorphism of conjugation by $a.$ In  Example \ref{example1}, we showed that the sequence of inner automorphism $ \hat{f(i)}^{-1}, i  \geq  1,$  is integrable. Hence by Theorem \ref{T5_new}, the infinite product $$ \varphi_f  =  \hat{f(1)}  \hat{f(2)}  \cdots $$ is an automorphism of the algebra $A.$

Let us show that the mapping   $f\rightarrow \varphi_f$ is injective. Let $f, \, g \in \mathbf{F}$ and $\varphi_f =\varphi_g .$ Applying  automorphisms $\varphi_f,$ $\varphi_g $ to $A_1,$ we see that $$\hat{f(1)}\big|_{A_1} = \hat{g(1)}\big|_{A_1}. $$ Hence $f(1)=g(1).$ Therefore $\hat{f(2)} \hat{f(3)} \cdots = \hat{g(2)} \hat{g(3)} \cdots  .$ Applying both sides to $A_2,$ we get $f(2)=g(2)$ and so on. So, $| \text{Aut} (A) |  \geq  |  \mathbb{F}  |^{\aleph_0}. $

On the other hand, $$| \text{Aut} (A)  |  \leq  | \text{Lin}(A) |  =   |  \mathbb{F}  |^{\aleph_0}. $$ We proved that for a unital algebra $A$ \ $| \text{Aut} (A)  |   =   |  \mathbb{F}  |^{\aleph_0}.$

Now, let the algebra $A$ be not unital. Suppose that $A$ contains an idempotent $e$ such that \ $\dim_{\mathbb{F}} eAe = \aleph_0.$ The algebra $eAe$ is unital. Hence by what we proved above and by Lemma \ref{L5_new!}, $$ |  \mathbb{F}  |^{\aleph_0}=|\text{Aut}(eAe)| \leq |\text{Aut}(A)| \leq |\text{Lin}(A)| = |  \mathbb{F}  |^{\aleph_0},$$ which implies $|\text{Aut}(A)| = |  \mathbb{F} |\,^{\aleph_0}.$

It remains to consider the case, when \ $\dim_{\mathbb{F}} eAe < \infty$ for all idempotents $e\in A.$ By Lemma \ref{L4_new!}, $A\cong M_{\infty}(\mathbb{F}).$ For an arbitrary mapping $f: \mathbb{N}\rightarrow \mathbb{F}$  consider the invertible infinite matrix $$ a_f = \text{Id} + \sum_{i=1}^{\infty} \ f(i) e_{2i-1, \, 2i},$$ where $\text{Id}$ is the identity $(\mathbb{N}\times \mathbb{N})$-matrix and $e_{i,j}$ are matrix units. The matrices $a_f$ are not finitary but $$ a_f^{-1} M_{\infty}(\mathbb{F}) a_f=M_{\infty}(\mathbb{F}) .$$ Let $\hat{a_f}$  denote the automorphism of conjugation by $a_f.$ The mapping $$ f \mapsto \hat{a_f} \in \text{Aut}(M_{\infty}(\mathbb{F}))$$ is injective since $$ a_f^{-1} e_{1,2i-1} a_f = e_{1,2i-1} + f(i) e_{1,2i} \quad \text{for} \quad i\geq 1. $$ Hence $$|\mathbb{F}  |^{\aleph_0} = |\text{Map}(\mathbb{N},\mathbb{F})| \leq |\text{Aut}(M_{\infty}(\mathbb{F}))| \leq |\text{Lin}_{\mathbb{F}}(A)|= |\mathbb{F}  |^{\aleph_0}.$$  \end{proof}

\end{document}